\def\aa#1{ \begin{align*} #1 \end{align*} }
\def\aaa#1{ \begin{align} #1 \end{align} }
\def\mm#1{ \begin{multline*} #1 \end{multline*} }
\def\mmm#1{ \begin{multline} #1 \end{multline} }
\newtheorem{thm}{\sc Theorem}
\newtheorem{lem}{\sc Lemma}
\newtheorem{pro}{\sc Proposition}
\newtheorem{rem}{\sc Remark}
\newcommand{\pl}{\partial}
\newcommand{\gt}{\geqslant}
\newcommand{\lt}{\leqslant}
\newcommand{\te}{\theta}
\newcommand{\al}{\alpha}
\newcommand{\gm}{\gamma}
 \newcommand{\Gm}{\Gamma}
 \newcommand{\Dl}{\Delta}
 \newcommand{\la}{\lambda}
 \newcommand{\La}{\Lambda}
 \newcommand{\sg}{\sigma}
\newcommand{\mc}{\mathcal}
\newcommand{\C}{{\rm C}}
\newcommand{\td}{\tilde}
\newcommand{\E}{\mathbb E}
\newcommand{\x}{\times}
\newcommand{\PP}{\mathbb P}
\newcommand{\rf}{\eqref}
\newcommand{\bi}{\begin{itemize}}
\newcommand{\ei}{\end{itemize}}
\newcommand{\lb}{\label}
\newcommand{\fdot}{\,\cdot\,}
\def\Rnu{{\mathbb R}}
\def\ffi{\varphi}
\newcommand{\sig}{\varsigma}
\def\begeq{\begin{equation} \begin{cases}} 
\def\endeq{ \end{cases} \end{equation}}
\def\eq#1{ \begeq #1 \endeq }
\def\bege{\begin{equation*} \begin{cases}} 
\def\ende{ \end{cases} \end{equation*}}
\def\eqq#1{ \bege #1 \ende}
\def\eqn#1{ \begin{equation} \begin{aligned} #1 \end{aligned}\end{equation}}
\def\com#1{}
\long\def\symbolfootnote[#1]#2{\begingroup%
\def\thefootnote{\fnsymbol{footnote}}\footnote[#1]{#2}\endgroup}
\titleformat{\section}[hang]{\large\bfseries}{\thesection.}{1ex}{}{}
\titleformat{\subsection}[hang]{\normalsize\bfseries}{\thesubsection}{2ex}{}{}
\titleformat{\subsubsection}[hang]{\small\bfseries}{\thesubsubsection}{2ex}{}{}
\title[Gaussian density estimates for solutions of fully coupled FBSDEs]{Gaussian density estimates for solutions \\ 
of fully coupled forward-backward SDEs}
\author{Christian Olivera}
\address{Departamento de Matem\'atica, Universidade Estadual de Campinas, Campinas, Brazil}
\email{colivera@ime.unicamp.br}
\author{Evelina Shamarova}
\address{Departamento de Matem\'atica, Universidade Federal da Para\'iba, Jo\~ao Pessoa, Brazil}
\email{evelina@mat.ufpb.br}
\begin{document}

\maketitle

\begin{abstract} 
We obtain upper and lower Gaussian density estimates for the laws of each component of the solution 
 to a one-dimensional fully coupled forward-backward SDE. 
 Our approach relies on the link between FBSDEs and quasilinear parabolic PDEs,
 and is fully based on the use of classical results on PDEs rather than on manipulation of FBSDEs, 
 compared to other papers on this topic.
 This essentially simplifies the analysis. 
\end{abstract}

\section{Introduction}
 Forward-backward stochastic differential equations (FBSDEs)
have numerous applications in stochastic control theory and mathematical finance
(see, for instance, \cite{Ci}, \cite{Karo}, \cite{Par},  and \cite{Yong}).
Several recent papers \cite{aboura,antonelli, mastrolia} studied existence of densities and density estimates for the laws of solutions of one-dimensional backward SDEs (BSDEs) (\cite{aboura, antonelli, mastrolia}).  To the best of authors' knowledge, the aforementioned problem has never been addressed in connection to the laws of solutions to fully coupled FBSDEs.

  In this paper, we are concerned with the fully coupled one-dimensional FBSDE
\eq{
\lb{fbsde}
X_t = x+ \int_0^t f(s,X_s, Y_s,Z_s) ds + \int_0^t \sg(s,X_s,Y_s) dB_s,\\
Y_t = h(X_T) + \int_t^T g(s,X_s,Y_s,Z_s) ds - \int_t^T Z_s dB_s,
}
where $B_t$ is a one-dimensional standard Brownian  motion, and $f$, $\sg$, $g$, and $h$ are functions 
defined on appropriate spaces and taking values in $\Rnu$. It
is known (see, e.g., \cite{ma}, \cite{Par},  \cite{Yong}, and also references therein) that there exists
a unique  $\mc F_t$-adapted solution $(X_t,Y_t, Z_t)$ of system (\ref{fbsde})  under some appropriate smoothness and boundedness 
conditions on its coefficients, where $\mc F_t$ is the augmented filtration generated by the Brownian motion $B_t$.

Our goal is to provide conditions that guarantee the existence of the densities for the laws of $X_t$, $Y_t$, and $Z_t$,
 and that allow Gaussian estimates of these densities. Additionally,
we obtain estimates of the tail probabilities of the laws of the solution components.
Our approach works for a large class of the FBSDE coefficients. 
In particular, the BSDE generator $g$
is not assumed to depend just on some of the spatial variables (unlike \cite{antonelli, mastrolia}),
or to be linear in $Z_s$ (unlike \cite{aboura}). To add even more generality, we obtain our density estimates
in the situation when the generator $g$ has the quadratic growth in the last variable, and hence,
it is not necessarily Lipschitz in $Z_s$.
Our method relies on the analysis of the quasilinear parabolic PDE associated to
FBSDE \rf{fbsde}:
\eq{
\lb{final}
\frac12 \sg^2(t,x,u) \pl^2_{xx} u + f(t,x,u, \sg(t,x,u)\pl_x u)\pl_x u + g(t,x,u, \sg(t,x,u)\pl_x u) \\ \;\; + \pl_t u = 0,
\hspace{2cm} u(T,x) = h(x),
} 
where $u$, $\pl_xu$, and $\pl^2_{xx}u$ are everywhere evaluated at $(t,x)$.
It is well known (see, e.g., \cite{ma}) that if $u$ is the $\C^{1,2}_b$-solution to final value problem \rf{final},
then it is related to the solution of FBSDE \rf{fbsde} by the formulas
\aaa{
\lb{link1}
Y_t = u(t,X_t), \quad Z_t = \pl_xu(t,X_t) \sg(t,X_t,u(t,X_t,)),
}
where $X_t$ is the unique $\mc F_t$-adapted solution to the SDE
\aaa{
\lb{sde}
& X_t = x + \int_0^t \td f(t,X_t) + \int_0^t \td \sg(t,X_t) dB_t  \qquad \text{with}\\
\lb{coeff-sde}
&\td f(t,x) = f(t,x,u(t,x), \pl_xu(t,x)\sg(t,x,u(t,x))), \quad  \td\sg(t,x) = \sg(t,x,u(t,x)).
}
Since the Malliavin differentiability and the existence of bounds for $D_rX_t$ are well known facts
(see, e.g., \cite{nualart}, \cite{detemple}), 
then, provided that the coefficients of PDE \rf{final} are sufficiently smooth, 
the Malliavin differentiability of $Y_t$ and $Z_t$ follows immediately, and, moreover, the existence
of bounds for $D_rY_t$ and $D_rZ_t$ is reduced to the existence of positive lower bounds for 
$\pl_x u$ and $\pl_x\big(\pl_x u\, \sg\big)$. This can be done by the classical comparison theorem 
for PDEs (see, e.g., \cite{friedman}). 

Let us remark that our assumptions allow the BSDE generators $g(t,x,u,p)$ to have the quadratic growth in $p$. 
It happens because the four step scheme, developed in \cite{ma}, 
also works for a quadratic BSDE, provided that it is one-dimensional.
This follows from the version of the existence and uniqueness theorem, obtained in \cite{lady},
 for the associated one-dimensional 
PDE \rf{final}. Remark that density (tail probability) estimates
for the law of the $Z_s$-component of quadratic BSDEs are important for some numerical schemes, 
as it was mentioned in \cite{mastrolia}.

In comparison to our approach, papers \cite{aboura, antonelli, mastrolia} mainly use manipulations of the BSDE itself, 
such as, consideration of  the BSDEs for the second order Malliavin derivatives of the solution processes, 
Girsanov's transformation,
It\^o's formula for various functions of the solution, etc, to arrive at the existence of
estimates for the Malliavin derivatives $D_rY_t$ and $D_rZ_t$. 

Overall,  compared to the previous works, our analysis is simpler, many of the assumptions are dropped  or 
easier formulated (c.f. \cite{mastrolia}), while the FBSDE itself is, overall, more general (in particular, fully coupled)
and the density estimates hold on the entire real line.


\section{Preliminaries}

For simplicity, all PDEs considered in this section are one-dimensional and with respect to one space variable,
although all the results are valid for PDEs of several space variables.
\subsection{Useful function spaces}
We start by defining some function spaces used in this paper.

The H\"older space $\C^{2+\beta}_b(\Rnu)$,
 $\beta\in (0,1)$,
is understood as the (Banach) space with the norm
\aa{
\|\phi\|_{\C^{2 +\beta}_b(\Rnu)} = \|\phi\|_{\C^2_b(\Rnu)} + [\phi'']_\beta,
\quad \text{where} \quad 
[\td \phi]_\beta = \sup_{x,y\in\Rnu, \,  0<|x-y|<1}\frac{|\td\phi(x)- \td\phi(y)|}{|x-y|^\beta},
}
and $\C^2_b(\Rnu)$ denotes the space of twice continuously differentiable functions  on $\Rnu$ with bounded derivatives up to the second order.

For a function $\phi(x,\xi)$ of more than one variable, the H\"older constant 
with respect to $x$ is defined as 
\aa{
[\phi]^x_{\beta} =  \sup_{x,x'\in\Rnu, \, 0<|x-x'|<1}  \frac{|\phi(x,\xi) - \phi(x',\xi)|}{|x-x'|^\beta},
}
i.e., it is understood as a function of $\xi$.

The H\"older spaces $\C^{1+\frac{\beta}2,2+\beta}_b([0,T]\x \Rnu)$, $\C^{\frac{\beta}2,\beta}_b([0,T]\x\Rnu)$,
$\C^{\frac{\beta}2,1+\beta}_b([0,T]\x\Rnu)$, and $\C^{0,\beta}_b([0,T]\x\Rnu)$ 
 ($\beta\in (0,1)$) are 
defined, respectively, as Banach spaces of functions $\phi(t,x)$ possessing the finite norms
\aa{
&\|\phi\|_{\C_b^{1+\frac{\beta}2,2+\beta}([0,T]\x \Rnu)} = 
\|\phi\|_{\C_b^{1,2}([0,T]\x \Rnu)} + 
\sup_{t\in [0,T]}[\pl_t \phi]_{\beta}^x + \sup_{t\in [0,T]}[\pl^2_{xx} \phi]_{\beta}^x \\
& \hspace{4.2cm}+  \sup_{x\in \Rnu}[\pl_t \phi]_{\frac{\beta}2}^t + \sup_{x\in \Rnu}[\pl_x \phi]_{\frac{1+\beta}2}^t
+ \sup_{x\in \Rnu}[\pl^2_{xx} \phi]_{\frac{\beta}2}^t; \\
&\|\phi\|_{\C_b^{\frac{\beta}2,\beta}([0,T]\x \Rnu)} = 
\|\phi\|_{\C_b([0,T]\x \Rnu)} + \sup_{t\in [0,T]}[\phi]_{\beta}^x + \sup_{x\in \Rnu}[\phi]_{\frac{\beta}2}^t;\\
&\|\phi\|_{\C_b^{\frac{\beta}2,1+\beta}([0,T]\x \Rnu)} = 
\|\phi\|_{\C_b([0,T]\x \Rnu)}  + \|\pl_x \phi\|_{\C_b^{\frac{\beta}2,\beta}([0,T]\x \Rnu)}; \\
&\|\phi\|_{\C_b^{0,\beta}([0,T]\x \Rnu)} =  \|\phi\|_{\C_b([0,T]\x \Rnu)} 
+ \sup_{t\in [0,T]}[\phi]_{\beta}^x,
}
where
$\C_b^{1,2}([0,T]\x \Rnu)$ is the space of bounded continuous functions
whose derivatives up to the first order in $t\in [0,T]$ and the second order in $x\in\Rnu$  
are bounded and continuous on $[0,T]\x \Rnu$, and $\C_b([0,T]\x \Rnu)$ is the space
of bounded continuous functions.
%
\subsection{Some results on quasilinear parabolic PDEs}
Here we formulate some results on linear and quasilinear parabolic PDEs which will be useful in the next section.

Consider the Cauchy problem for a one-dimensional PDE of one space variable
\eq{
\lb{pde}
a(t,x,u) \pl^2_{xx} u + f(t,x,u,\pl_x u) \pl_x u + g(t,x,u, \pl_x u) - \pl_t u = 0, \\
u(0,x) = h(x),
}
where $u$, $\pl_x u$, $\pl_t u$, and $\pl^2_{xx} u$ are everywhere evaluated at $(t,x)$.

In what follows, $(t,x,u,p)$ denotes the element of $[0,T]\x \Rnu \x \Rnu \x \Rnu$, 
$f$ and $g$ are functions of $(t,x,u,p)$, and $a$ is a function of $(t,x,u)$. Further,
$\pl_t$, $\pl_x$, $\pl_u$, and $\pl_p$ denote the partial derivatives w.r.t. $t$, $x$, $u$, and $p$,
respectively. 

The theorem below, proved in  \cite{lady} (Theorem 8.1, Section V, p. 495),  provides
the existence and uniqueness of solution to problem  \rf{pde}.
\begin{thm}
\lb{lady}
Assume conditions (i)--(vii) below:
\bi
\item[(i)] for all $(t,x,u) \in [0,T]\x \Rnu \x \Rnu$, $\nu(|u|) \lt a(t,x,u) \lt \mu(|u|)$,
where $\nu$ and $\mu$ are non-increasing and, respectively, non-decreasing positive functions;
\item[(ii)] for all $(t,x,u,p) \in [0,T]\x \Rnu \x \Rnu \x \Rnu$, $g(t,x,u,p) u \lt c_1 + c_2 |u|^2$,
where $c_1$ and $c_2$ are positive constants;
\item[(iii)] the function $h$ is of class $\C^{2+\beta}_b(\Rnu)$, $\beta\in (0,1)$.
\item[(iv)]  $\pl_x a$ and $\pl_u a$ exist and
$|a| + |\pl_x a| + |\pl_u a| \lt \al$, where $\al>0$ is a constant;
\item[(v)] there exists a positive non-decreasing function $\td \mu$ such that
$|f| \lt \td\mu(|u|)(1+|p|)$ and $|g| \lt \td\mu(|u|)(1+|p|^2)$
everywhere on $[0,T]\x\Rnu \x\Rnu \x\Rnu$;
\item[(vi)] the functions $a$, $\pl_x a$, $\pl_u a$,  $f$, and $g$
are H\"older continuous in $t$, $x$, $u$, and $p$ with exponents $\frac{\beta}2$, $\beta$,
$\beta$, and $\beta$, respectively, and globally bounded H\"older constants;
\item[(vii)] the derivatives $\pl_u f$, $\pl_u g$, $\pl_p f$, $\pl_p g$ exist and
$
\sup_{\substack{
(t,x) \in [0,T]\x \Rnu\\
|u|+|p| \lt N }}
\big(|\pl_u f| + |\pl_u g| + |\pl_p f| + |\pl_p g|\big) \lt \gm(N),
$
where $\gm(N)$ is a positive constant depending on $N$.
\ei
Then, there exists a unique $\C^{1+\frac{\beta}2, 2+\beta}_b([0,T]\x\Rnu)$-solution to problem \rf{pde}.
\end{thm}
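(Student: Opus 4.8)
The statement is a classical result of quasilinear parabolic theory, and the plan is to obtain existence through the Leray--Schauder fixed point theorem after assembling a chain of \emph{a priori} estimates that culminate in a bound on $\|u\|_{\C^{1+\frac{\beta}2,2+\beta}_b([0,T]\x\Rnu)}$. The first and lowest-order estimate is an $L^\infty$ bound on $u$: condition (ii), $g(t,x,u,p)\,u \lt c_1 + c_2|u|^2$, is exactly the structure (sign) condition that, via the maximum principle applied to a function of the form $e^{-\la t}u^2$ with $\la$ large, controls $\sup_{[0,T]\x\Rnu}|u|$ by a constant $M_0$ depending only on $\|h\|_{\C_b}$, $c_1$, $c_2$, and $T$.

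Next I would derive the gradient bound $\sup|\pl_x u| \lt M_1$ by a Bernstein-type argument: differentiate the equation in $x$, test against $\pl_x u$, and exploit the uniform parabolicity on the range $|u| \lt M_0$ — condition (i) gives $a \gt \nu(M_0) > 0$ — together with the growth bounds (v), $|f| \lt \td\mu(M_0)(1+|p|)$ and $|g| \lt \td\mu(M_0)(1+|p|^2)$. Once $u$ and $\pl_x u$ are bounded, the coefficients $a$, $f$, $g$ only sample the compact set $\{|u|+|p| \lt M_0+M_1\}$, the equation is uniformly parabolic there, and De Giorgi--Nash--Moser estimates furnish a $\C^{\frac{\beta}2,\beta}_b$ bound on $\pl_x u$. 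With this regularity and the H\"older continuity of the coefficients from (vi), the equation may be read as a linear parabolic equation with H\"older coefficients, and linear Schauder theory upgrades the estimate to the full $\C^{1+\frac{\beta}2,2+\beta}_b$ bound.

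To conclude existence I would define the solution operator $\Phi$ on $\C^{\frac{\beta}2,1+\beta}_b$ sending $v$ to the solution $u$ of the linear problem obtained by freezing the coefficients at $(v,\pl_x v)$; the a priori bound confines the relevant fixed points to a ball, while the fact that $\Phi(v)$ gains regularity and lands in $\C^{1+\frac{\beta}2,2+\beta}_b$ renders $\Phi$ compact, so the Leray--Schauder degree produces a fixed point, which is the sought solution. Uniqueness then follows by subtracting two solutions $u_1,u_2$, writing the difference equation whose coefficients are expressed through $\pl_u f$, $\pl_u g$, $\pl_p f$, $\pl_p g$ (finite on the bounded range by (vii)), and applying the maximum principle to the resulting linear equation.

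The main obstacle is the gradient estimate, since (v) permits $g$ to grow \emph{quadratically} in $p$ — the borderline ``natural growth'' regime in which Bernstein's technique is most delicate. This is precisely the step where one-dimensionality of the space variable is essential: the $L^\infty$ bound $M_0$ pins down $\td\mu(|u|)$, and the quadratic term $|\pl_x u|^2$ produced by the generator can be absorbed into the principal part thanks to the strict ellipticity $a \gt \nu(M_0)$. This absorption is exactly the mechanism behind Theorem 8.1 of \cite{lady}, on which the statement rests.
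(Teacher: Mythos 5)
The paper contains no proof of this theorem: it is imported verbatim from \cite{lady} (Theorem 8.1, Section V, p.~495), and your outline reconstructs essentially the original Ladyzhenskaya--Solonnikov--Uraltseva argument --- the $L^\infty$ bound from the sign condition (ii), a gradient bound in which strict parabolicity from (i) on the range $\{|u|\lt M_0\}$ absorbs the natural quadratic growth of $g$ permitted by (v) (this is indeed where one-dimensionality matters), H\"older estimates for $\pl_x u$, a Schauder upgrade using (vi) and (iii), a fixed-point argument for existence, and uniqueness from the linearized difference equation, for which the local derivative bounds of (vii) suffice because both solutions have bounded $u$ and $\pl_x u$. So in substance your plan is the right one and the same one as the cited source.

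One step, however, fails as literally written: on the unbounded cylinder $[0,T]\x\Rnu$ the embedding of $\C^{1+\frac{\beta}2,2+\beta}_b([0,T]\x\Rnu)$ into $\C^{\frac{\beta}2,1+\beta}_b([0,T]\x\Rnu)$ is \emph{not} compact (translates of a fixed bump function converge to zero locally uniformly but stay at unit distance in the H\"older norm), so the fact that your solution map $\Phi$ gains regularity does not make it compact, and Leray--Schauder degree theory cannot be invoked directly on the whole line. The standard repair, and the way the Cauchy problem is actually treated in \cite{lady}, is to run the fixed-point argument on an exhaustion by bounded cylinders $[0,T]\x[-R,R]$, where the embedding is compact; the a priori bounds of your first two paragraphs are uniform in $R$, so one passes to the limit $R\to\infty$ using interior Schauder estimates and Arzel\`a--Ascoli on compact subsets. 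Uniqueness for the limit then goes through exactly as you describe, since the maximum principle for the linear equation satisfied by $u_1-u_2$ is valid for bounded solutions on $[0,T]\x\Rnu$. With this modification your sketch is a complete and faithful rendering of the proof of Theorem 8.1 of \cite{lady}.
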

Now consider a Cauchy problem for a linear PDE:
\eq{
\lb{pde-lin}
a(t,x) \pl^2_{xx} u + b(t,x) \pl_x u + c(t,x) u + g(t,x) - \pl_t u = 0, \\
u(0,x) = \ffi(x).
}
We have the following result, proved in \cite{friedman} (Theorem 12, p. 25 and Theorem 10, p. 44), 
on the solvability of problem \rf{pde-lin} and the representation of its solution via
the fundamental solution $\Gm(t,x,s,z)$ to PDE  \rf{pde-lin}.
\begin{thm}
\lb{fundamental}
Let PDE \rf{pde-lin} be uniformly parabolic, and let
the coefficient $a$  of \rf{pde-lin} be of class $\C^{\frac{\beta}2,\beta}_b([0,T]\x \Rnu)$, $\beta\in (0,1)$.
Further let the coefficients $b$, $c$, and the function $g$ be of class
$\C^{0,\beta}_b([0,T]\x \Rnu)$, and the initial condition $\ffi$
be of class $\C_b(\Rnu)$.  Then, there exists a unique $\C^{1,2}_b([0,T]\x \Rnu)$-solution to problem \rf{pde-lin}.
Moreover, this solution takes the form
\aa{
 u(t,x) = \int_{\Rnu}\Gm(t,x,0,z) \ffi(z) dz - \int_0^t \int_{\Rnu} \Gm(t,x,s,z) g(s,z) ds dz.
 } 
\end{thm}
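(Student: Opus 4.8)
The plan is to construct the fundamental solution $\Gm(t,x,s,z)$ explicitly via Levi's parametrix method and then to verify that the stated integral formula produces the unique bounded classical solution. Since the result is quoted from Friedman's monograph, I would follow the classical route; the skeleton is as follows. First I would freeze the leading coefficient at a point $(s,z)$ and take as parametrix the Gaussian kernel
\[
Z_0(t,x,s,z) = \frac{1}{\sqrt{4\pi a(s,z)(t-s)}}\exp\!\left(-\frac{(x-z)^2}{4\,a(s,z)(t-s)}\right),
\]
which solves the constant-coefficient equation $a(s,z)\pl^2_{xx}v - \pl_t v = 0$, so that $\pl_t Z_0 = a(s,z)\pl^2_{xx}Z_0$. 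Uniform parabolicity guarantees that $a$ stays bounded away from $0$ and $\infty$, and hence yields the Gaussian bounds on $Z_0$ together with its $x$- and $t$-derivatives. I would then seek $\Gm$ in the form
\[
\Gm(t,x,s,z) = Z_0(t,x,s,z) + \int_s^t\!\!\int_{\Rnu} Z_0(t,x,\tau,\eta)\,\Phi(\tau,\eta,s,z)\,d\eta\,d\tau,
\]
and fix the density $\Phi$ by requiring that $\Gm$ be annihilated, in the variables $(t,x)$, by the parabolic operator $L = a\pl^2_{xx} + b\pl_x + c - \pl_t$.

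Applying $L$ to this ansatz produces a Volterra-type integral equation
\[
\Phi(t,x,s,z) = LZ_0(t,x,s,z) + \int_s^t\!\!\int_{\Rnu} LZ_0(t,x,\tau,\eta)\,\Phi(\tau,\eta,s,z)\,d\eta\,d\tau.
\]
The kernel $LZ_0$ is singular as $\tau\to t$, and this is where the regularity hypotheses enter decisively: the $\C^{\frac{\beta}2,\beta}_b$-continuity of $a$ lets one write $LZ_0 = (a(t,x)-a(s,z))\pl^2_{xx}Z_0 + b\,\pl_xZ_0 + c\,Z_0$, the first term gaining a factor $|x-z|^\beta$ that, against the Gaussian, renders $LZ_0$ only weakly singular, of order $(t-\tau)^{-1+\frac{\beta}2}$ after integration in $\eta$; the $\C^{0,\beta}_b$-continuity of $b$ and $c$ controls the remaining, milder terms. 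Consequently the equation is solved by successive approximations, $\Phi=\sum_{k\gt 1}(LZ_0)^{*k}$ in terms of iterated space-time convolutions, the series converging and producing Gaussian-type upper bounds for $\Gm$, $\pl_x\Gm$, $\pl^2_{xx}\Gm$, and $\pl_t\Gm$.

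With $\Gm$ at hand I would verify the representation. The term $\int_{\Rnu}\Gm(t,x,0,z)\ffi(z)\,dz$ solves the homogeneous equation and attains the continuous bounded datum $\ffi\in\C_b(\Rnu)$, since $\Gm(t,x,0,\cdot)$ is an approximate identity as $t\downarrow 0$; boundedness of the total mass $\int_{\Rnu}\Gm(t,x,0,z)\,dz$ then yields a bounded solution. The volume-potential term $\int_0^t\!\int_{\Rnu}\Gm(t,x,s,z)g(s,z)\,ds\,dz$ is treated by Duhamel's principle, and here lies the main obstacle: the Gaussian bound alone does not justify differentiating this potential twice in $x$, because $\pl^2_{xx}\Gm$ is not integrable up to $s=t$. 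This is precisely why the hypothesis $g\in\C^{0,\beta}_b([0,T]\x\Rnu)$ is indispensable — subtracting and adding $g(s,x)$ expresses $\pl^2_{xx}$ of the potential as a convergent singular integral plus a correction, exactly as in the Schauder theory of the heat potential, the $|x-\,\cdot\,|^\beta$ gain from Hölder continuity making the singular part integrable. The uniform bounds on $g$ and the Gaussian estimates then deliver the boundedness of $u$ and of its derivatives, i.e. $u\in\C_b^{1,2}([0,T]\x\Rnu)$.

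Finally, uniqueness in the class $\C_b^{1,2}$ follows from the parabolic maximum principle. If $u_1,u_2$ are two bounded solutions, their difference $w$ is a bounded solution of the homogeneous problem with $w(0,\cdot)=0$; after the substitution $w=e^{\la t}\tilde w$, which turns the zeroth-order coefficient negative, the maximum principle for bounded solutions on the unbounded domain (a Phragm\'en--Lindel\"of argument) forces $w\equiv 0$ on $[0,T]\x\Rnu$.
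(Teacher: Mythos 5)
Your proposal is correct, and it is essentially the proof the paper points to: the theorem is quoted without proof from \cite{friedman} (Theorem 12, p.~25 and Theorem 10, p.~44), and your sketch reproduces precisely Friedman's classical argument --- the frozen-coefficient Gaussian parametrix, the Volterra equation for $\Phi$ with the weakly singular kernel of order $(t-\tau)^{-1+\frac{\beta}{2}}$ coming from the H\"older continuity of $a$, the use of $g\in\C^{0,\beta}_b$ to differentiate the volume potential twice in $x$, and uniqueness in the bounded class via the maximum principle after the substitution $w=e^{\la t}\tilde w$. In short, you have correctly reconstructed the cited source's proof, so there is nothing to compare beyond noting the match.
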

Introduce the linear differential operator
\aa{
L u = a(t,x) \pl^2_{xx} u + b(t,x) \pl_x u + c(t,x) u - \pl_t u.
}
Theorem \ref{hold-unique} below, provides conditions when the solution $u$ to \rf{pde-lin}
belongs to class $\C^{1+\frac{\beta}2, 2+\beta}_b([0,T]\x\Rnu)$. The theorem was
proved in \cite{lady} (Theorem 5.1, p. 320).
\begin{thm}
\lb{hold-unique}
Let PDE \rf{pde-lin} be uniformly parabolic, and the coefficients of the operator $L$ and
the function $g$ belong to class $\C^{\frac{\beta}2,\beta}_b([0,T]\x \Rnu)$. Further let
the initial condition $\ffi$ belong to class $\C^{2+\beta}_b(\Rnu)$. Then, problem \rf{pde-lin}
has a unique  $\C^{1+\frac{\beta}2, 2+\beta}_b([0,T]\x\Rnu)$-solution $u(t,x)$. 
\end{thm}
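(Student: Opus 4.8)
The plan is to separate the two claims, uniqueness and regularity, since almost all of the content lies in the latter. First I would note that the hypotheses here are strictly stronger than those of Theorem \ref{fundamental}: indeed $\C^{\frac{\beta}2,\beta}_b([0,T]\x\Rnu)\zz\C^{0,\beta}_b([0,T]\x\Rnu)$ and $\C^{2+\beta}_b(\Rnu)\zz\C_b(\Rnu)$, so $a$ has exactly the regularity required there while $b$, $c$, $g$, and $\ffi$ have more. Hence Theorem \ref{fundamental} already produces a unique $\C^{1,2}_b([0,T]\x\Rnu)$-solution $u$ of \rf{pde-lin}, together with its representation through the fundamental solution $\Gm$. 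Since every $\C^{1+\frac{\beta}2,2+\beta}_b$-solution is a fortiori a $\C^{1,2}_b$-solution, uniqueness in the smaller class follows at once from the uniqueness in Theorem \ref{fundamental}. The entire problem therefore reduces to showing that this particular $u$ lies in $\C^{1+\frac{\beta}2,2+\beta}_b([0,T]\x\Rnu)$.

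For the regularity upgrade I would work directly from the representation
\[
u(t,x)=\int_{\Rnu}\Gm(t,x,0,z)\,\ffi(z)\,dz-\int_0^t\!\!\int_{\Rnu}\Gm(t,x,s,z)\,g(s,z)\,dz\,ds
\]
and treat the two potentials separately, using the classical Gaussian-type bounds on $\Gm$, $\pl_x\Gm$, $\pl^2_{xx}\Gm$, and $\pl_t\Gm$ that underlie the construction of $\Gm$ by the parametrix method. The Poisson term inherits $\C^{2+\beta}$-regularity in $x$ and the matching $\C^{1+\frac{\beta}2}$-regularity in $t$ from these bounds together with $\ffi\in\C^{2+\beta}_b$. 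The volume potential is the delicate one: differentiating it twice in $x$ yields the kernel $\pl^2_{xx}\Gm$, which is non-integrable along the diagonal $s=t$, $z=x$. The standard remedy is to subtract $g(s,x)$, writing the second derivative as $\int_0^t\!\int_{\Rnu}\pl^2_{xx}\Gm\,[g(s,z)-g(s,x)]\,dz\,ds$ plus a boundary contribution; the $\beta$-Hölder continuity of $g$ in $x$ then makes the integral absolutely convergent, and applied to spatial increments it yields the $\beta$-Hölder continuity of $\pl^2_{xx}u$ in $x$. The $\frac{\beta}2$-Hölder continuity in $t$ of $\pl_t u$ is obtained analogously from the joint space-time bounds on $\pl_t\Gm$.

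A more structural route, which I would prefer if the kernel estimates became unwieldy, is the method of continuity: one proves the a priori Schauder estimate $\|u\|_{\C^{1+\frac{\beta}2,2+\beta}_b}\lt C\big(\|g\|_{\C^{\frac{\beta}2,\beta}_b}+\|\ffi\|_{\C^{2+\beta}_b}\big)$ by freezing $a$, $b$, $c$ at a point, comparing $L$ with the constant-coefficient operator $a(t_0,x_0)\pl^2_{xx}-\pl_t$ (for which the estimate is explicit), and absorbing the error by interpolation of Hölder norms; one then deforms $L$ into the heat operator along $L_\tau=\tau L+(1-\tau)(\pl^2_{xx}-\pl_t)$, $\tau\in[0,1]$, whose coefficients satisfy the hypotheses uniformly, and runs a standard open-closed argument in the Banach space $\C^{1+\frac{\beta}2,2+\beta}_b$ from $\tau=0$ to $\tau=1$. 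In either route the genuine difficulty is the same, namely controlling the $\beta$-seminorm of $\pl^2_{xx}u$ and the $\frac{\beta}2$-seminorm of $\pl_t u$ \emph{uniformly up to the initial time} $t=0$, so as to match the $\C^{2+\beta}$-regularity of $\ffi$; away from $t=0$ parabolic smoothing makes everything routine, and it is precisely this initial-layer estimate where the analytic work sits.
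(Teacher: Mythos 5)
There is nothing in the paper to compare your argument against: the paper does not prove this theorem, it quotes it from Ladyzhenskaya--Solonnikov--Uraltseva (the citation is to Theorem 5.1 of Chapter IV, p.~320 of \cite{lady}), just as it quotes Theorems \ref{fundamental} and \ref{comparison} from \cite{friedman}. Your sketch is, in substance, a correct outline of how the cited result is classically established, and both of your routes are standard: the potential-theoretic route via the parametrix bounds on $\Gm$ is Friedman's, while the method of continuity with the a priori Schauder estimate is closer to the LSU treatment. Your reduction of uniqueness to Theorem \ref{fundamental} is clean and correct, since $\C^{\frac{\beta}2,\beta}_b([0,T]\x\Rnu)\zz\C^{0,\beta}_b([0,T]\x\Rnu)$ and $\C^{2+\beta}_b(\Rnu)\zz\C_b(\Rnu)$, so every solution in the smaller class is a $\C^{1,2}_b$-solution and Theorem \ref{fundamental} applies.

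Two places where your sketch is thinner than a proof deserve naming. First, for a variable-coefficient operator one does not have $\int_{\Rnu}\pl^2_{xx}\Gm(t,x,s,z)\,dz=0$, so what you call a ``boundary contribution'' in the subtraction trick is really the term $\int_0^t g(s,x)\,\pl^2_{xx}\int_{\Rnu}\Gm(t,x,s,z)\,dz\,ds$, and its convergence requires the parametrix estimate $\big|\pl^2_{xx}\int_{\Rnu}\Gm(t,x,s,z)\,dz\big|\lt C(t-s)^{-1+\frac{\beta}2}$; note that this is precisely where the $\frac{\beta}2$-H\"older continuity of the coefficients in $t$, which Theorem \ref{hold-unique} assumes beyond Theorem \ref{fundamental}, is actually consumed. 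Second, your claim that the Poisson term ``inherits'' $\C^{2+\beta}$-regularity in $x$ directly from the Gaussian bounds is too quick: those bounds alone give only $|\pl^2_{xx}\int\Gm(t,x,0,z)\ffi(z)\,dz|\lt Ct^{-1}\|\ffi\|_{\C_b}$, which degenerates as $t\to 0$, so uniformity up to the initial time genuinely requires transferring two derivatives onto $\ffi$ (using $\ffi\in\C^{2+\beta}_b$) modulo commutator terms --- you correctly identify this initial-layer estimate as the locus of the analytic work at the end, but it applies to the Poisson term as much as to the volume potential. Since the paper's stance is to cite rather than reprove, these remarks are not defects relative to the paper; they are simply the points you would have to execute if the sketch were to stand as a self-contained proof.
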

The following below comparison theorem, proved in \cite{friedman} (Theorem 9, p 43), will be an important tool
in the next section.
\begin{thm}
\lb{comparison}
Let the coefficients of $L$ be bounded and continuous on $[0,T]\x\Rnu$.
Assume $Lu\lt 0$ on $(0,T] \x \Rnu$ and $u$ is bounded.
If $\ffi(x) \gt 0$ on $\Rnu$, then $u(t,x) \gt 0$ on $[0,T]\x \Rnu$.
\end{thm}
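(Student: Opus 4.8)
The plan is to prove this as a parabolic minimum principle on the unbounded strip $[0,T]\x\Rnu$. Recall that $Lu = a\,\pl^2_{xx}u + b\,\pl_x u + c\,u - \pl_t u$, so the hypothesis $Lu\lt 0$ says exactly that $u$ is a supersolution, while the initial datum $u(0,\fdot)=\ffi$ is nonnegative; I want to deduce $u\gt 0$. The first nuisance is that the zero-order coefficient $c$ carries no sign, which would corrupt the sign bookkeeping at a minimum. To eliminate it I would substitute $w = e^{-\la t}u$ with $\la > \sup|c|$, which is finite because the coefficients are bounded. A direct computation gives $Lu = e^{\la t}\big(a\,\pl^2_{xx}w + b\,\pl_x w + \td c\,w - \pl_t w\big)$ with $\td c = c - \la < 0$, so the rescaled supersolution inequality $a\,\pl^2_{xx}w + b\,\pl_x w + \td c\,w - \pl_t w \lt 0$ holds, and $w$ is still bounded with $w(0,\fdot)=\ffi\gt 0$. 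Since $e^{\la t}>0$, it suffices to show $w\gt 0$, i.e. I have reduced to an operator whose zero-order coefficient is strictly negative.

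The second, and genuinely delicate, obstacle is that the space variable ranges over all of $\Rnu$, so a bounded $w$ need not attain its infimum and the classical maximum principle on a compact set does not apply directly. I would handle this with a barrier growing at spatial infinity. Take $H(t,x) = 1 + x^2 + Kt$; then $a\,\pl^2_{xx}H + b\,\pl_x H + \td c\,H - \pl_t H = 2a + 2bx + \td c(1+x^2+Kt) - K$. Because $\td c<0$ and $b$ is bounded, the quantity $\td c\,x^2 + 2bx$ is bounded above on $\Rnu$, so choosing $K$ large enough makes this whole expression strictly negative throughout $[0,T]\x\Rnu$. Consequently, for every $\eps>0$ the function $w_\eps = w + \eps H$ satisfies $a\,\pl^2_{xx}w_\eps + b\,\pl_x w_\eps + \td c\,w_\eps - \pl_t w_\eps < 0$, while $w_\eps(0,\fdot)=\ffi + \eps(1+x^2)\gt 0$ and, crucially, $w_\eps(t,x)\to+\infty$ as $|x|\to\infty$ uniformly in $t\in[0,T]$, since $w$ is bounded and $Kt\gt 0$.

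The final step is the minimum-principle contradiction. By the coercivity at spatial infinity together with continuity, $w_\eps$ attains its infimum over the strip at some point $(t_0,x_0)$ with $x_0$ finite. If this minimal value were strictly negative, then necessarily $t_0>0$, because $w_\eps\gt 0$ on $\{t=0\}$; at such a minimum one has $\pl_x w_\eps(t_0,x_0)=0$, $\pl^2_{xx}w_\eps(t_0,x_0)\gt 0$, and $\pl_t w_\eps(t_0,x_0)\lt 0$ (an equality if $t_0<T$, a one-sided inequality if $t_0=T$). Feeding these into the operator and using $a\gt 0$ (parabolicity), $\td c<0$ and $w_\eps(t_0,x_0)<0$, every term of $a\,\pl^2_{xx}w_\eps + b\,\pl_x w_\eps + \td c\,w_\eps - \pl_t w_\eps$ at $(t_0,x_0)$ is nonnegative and the zero-order term $\td c\,w_\eps$ is strictly positive, so the expression is $>0$, contradicting the strict inequality $<0$ established above. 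Hence $w_\eps\gt 0$ on the whole strip; letting $\eps\to 0$ yields $w\gt 0$, and therefore $u=e^{\la t}w\gt 0$. I expect the only essential difficulty to be the unboundedness of $\Rnu$, which the growing barrier $H$ circumvents; the exponential rescaling and the sign-chasing at the minimum are routine.
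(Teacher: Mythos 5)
Your proof is correct, and a comparison with ``the paper's proof'' is necessarily indirect here: the paper does not prove Theorem \ref{comparison} at all, but quotes it from Friedman's book (Theorem 9, p.~43). Your argument is essentially the classical one underlying that citation: the exponential rescaling $w=e^{-\la t}u$ to force a strictly negative zero-order coefficient, the quadratic barrier $H=1+x^2+Kt$ (made into a strict supersolution by taking $K$ large, which works because $\td c\,x^2+2bx$ is bounded above when $\td c<0$ and $b$ is bounded), the observation that $w+\eps H$ is coercive in $x$ uniformly in $t$ so its infimum is attained, and the sign-chasing at an interior-or-terminal minimum, including the correct one-sided inequality $\pl_t w_\eps\lt 0$ when $t_0=T$; you also correctly use only $Lu\lt 0$ on $(0,T]\x\Rnu$, never at $t=0$. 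Two remarks. First, you rightly flag that the step $a\,\pl^2_{xx}w_\eps\gt 0$ at the minimum needs $a\gt 0$: parabolicity is not stated in the theorem as printed in the paper, but it is implicit from the surrounding context (every application is to a uniformly parabolic operator, and without it the statement is false), so making it explicit is a feature of your write-up, not a gap. Second, Friedman's cited theorem is somewhat more general --- it tolerates coefficients growing at infinity (roughly $|b|$ linear and $|c|$ quadratic in $x$) --- whereas your barrier argument uses boundedness of $b$ and $c$; but bounded coefficients is exactly the hypothesis of the theorem as stated and as used in Lemmas \ref{ux-bound} and \ref{lem2}, so your self-contained proof fully covers what the paper needs.
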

\subsection{A link between FBSDEs and quasilinear parabolic PDEs}
It is well known that there is a link between FBSDE \rf{fbsde} and a quasilinear parabolic PDE of form \rf{pde}
(see, e.g., \cite{ma}). Specifically, the final value problem
for the PDE associated to FBSDE \rf{fbsde} takes form \rf{final}.
By introducing the time-changed function $\te(t,x) = u(T-t,x)$, we transform \rf{final} to the Cauchy  
problem
\mmm{
\lb{cauchy}
\frac12 \sg^2(T-t,x,\te) \pl^2_{xx} \te + f(T-t,x,\te, \sg(T-t,x,\te)\pl_x \te)\pl_x \te \\ + g(T-t,x,\te, \sg(T-t,x,\te)\pl_x \te) - \pl_t \te = 0,\quad
\te(0,x) = h(x).
} 
Remark that under assumptions (i)--(vii) of Theorem \ref{lady}, the existence
and uniqueness of a $\C^{1+\frac{\beta}2,2+\beta}_b([0,T]\x \Rnu)$-solution to \rf{cauchy}
is established, and is equivalent to the existence and uniqueness of a $\C^{1+\frac{\beta}2,2+\beta}_b([0,T]\x \Rnu)$-solution $u$ to final value problem \rf{final}.
The theorem below provides an explicit solution to FBSDE \rf{fbsde} via the solution $u$.
\begin{thm}
\lb{link} Let the functions $f$, $g$, and $h$ satisfy assumptions (ii), (iii), (v)--(vii) of Theorem \ref{lady}.
Further let the function $\sg$ satisfy assumptions (i), (iv), and (vi) of the same theorem in the place of the function $a$.
Then, there exists a unique $\mc F_t$-adapted solution $(X_t,Y_t,Z_t)$
to FBSDE \rf{fbsde}. Moreover, this solution takes form \rf{link1} with $u$ being
the unique $\C^{1+\frac{\beta}2,2+\beta}_b([0,T]\x \Rnu)$-solution, $\beta\in (0,1)$, to 
problem \rf{final}.
\end{thm}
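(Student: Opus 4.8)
The plan is to realize the four-step scheme of \cite{ma} entirely inside the PDE framework of Section 2, so that the only genuinely new work is checking that the present hypotheses feed correctly into Theorem \ref{lady} and then reading off \rf{link1} by It\^o's formula. \emph{Step 1 (reduction to the quasilinear PDE).} First I would pass from the final value problem \rf{final} to the Cauchy problem \rf{cauchy} via $\te(t,x)=u(T-t,x)$, which has the form \rf{pde} with $a(t,x,\te)=\tfrac12\sg^2(T-t,x,\te)$ and with $f,g$ replaced by their compositions $f(\cdot,\sg\,\cdot)$, $g(\cdot,\sg\,\cdot)$ in the last slot. Then I would verify conditions (i)--(vii) of Theorem \ref{lady} for these effective coefficients. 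Condition (i) holds for $a=\tfrac12\sg^2$ since $\nu\lt\sg\lt\mu$ forces $\tfrac12\nu^2\lt a\lt\tfrac12\mu^2$; (iv) and (vi) follow from $\pl_x a=\sg\,\pl_x\sg$, $\pl_u a=\sg\,\pl_u\sg$ and the fact that products of bounded H\"older functions are H\"older; (ii) is inherited verbatim because $g(t,x,u,p)u\lt c_1+c_2|u|^2$ is assumed for \emph{all} $p$, in particular for $p=\sg\,\pl_x u$; the growth bounds (v) survive composition at the cost of enlarging $\tilde\mu$, using $|\sg|\lt\mu(|u|)$ so that $|f(t,x,u,\sg p)|\lt\tilde\mu(|u|)\big(1+\mu(|u|)\big)(1+|p|)$ and similarly the quadratic bound for $g$; and (vii) holds on each ball $|u|+|p|\lt N$ because there $|\sg p|$ stays bounded while $\pl_p\big(f(\cdot,\sg\,\cdot)\big)=\sg\,\pl_p f$ and $\pl_u\big(f(\cdot,\sg\,\cdot)\big)=\pl_u f+(\pl_u\sg)\pl_p f$ are controlled by (iv) and (vii). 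Theorem \ref{lady} then produces a unique $\C^{1+\frac{\beta}2,2+\beta}_b([0,T]\x\Rnu)$-solution, equivalently a unique such $u$ solving \rf{final}.

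\emph{Step 2 (existence and the representation \rf{link1}).} With $u$ fixed I would form $\tilde f,\tilde\sg$ as in \rf{coeff-sde}. Since $u\in\C^{1,2}_b$, both $u$ and $\pl_x u$ are bounded, so $\tilde\sg(t,x)=\sg(t,x,u(t,x))$ has bounded $x$-derivative $\pl_x\sg+\pl_u\sg\,\pl_x u$ by (iv), hence is Lipschitz in $x$ and uniformly elliptic by (i), while $\tilde f$ is bounded and H\"older by (v)--(vi). Thus \rf{sde} is well posed (a uniformly elliptic, Lipschitz diffusion coefficient with a bounded drift), yielding a unique $\mc F_t$-adapted $X_t$. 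Setting $Y_t=u(t,X_t)$ and $Z_t=\pl_x u(t,X_t)\,\tilde\sg(t,X_t)$, I would apply It\^o's formula to $u(t,X_t)$; because here $\sg(t,X_t,Y_t)=\sg(t,X_t,u(t,X_t))$, the PDE \rf{final} evaluated along $(t,X_t)$ collapses the drift to $-g(t,X_t,Y_t,Z_t)$ and the diffusion coefficient to $Z_t$, so integrating from $t$ to $T$ and using $u(T,\cdot)=h$ reproduces the backward equation, while the forward equation holds by construction. This establishes existence together with \rf{link1}.

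\emph{Step 3 (uniqueness) and the main obstacle.} For uniqueness I would show that \emph{any} adapted solution $(X,Y,Z)$ must satisfy $Y_t=u(t,X_t)$. Applying It\^o to $u(t,X_t)$ along the given forward dynamics and subtracting the backward equation, the process $\dl_t=u(t,X_t)-Y_t$ solves a linear BSDE whose coefficients are $O(|\dl_t|)$ together with a term linear in $\sg(t,X_t,Y_t)\pl_x u-Z_t$, and whose terminal value vanishes, $\dl_T=h(X_T)-h(X_T)=0$; the local Lipschitz bounds (vii) and ellipticity then force $\dl\equiv0$ by a Gronwall estimate. Once $Y_t=u(t,X_t)$, the forward equation closes into exactly \rf{sde}, whose solution is unique, so $X$, and hence $Y$ and $Z$, are determined. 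I expect this uniqueness step to be the main obstacle: in the PDE the nonlinearities $\sg,f,g$ are evaluated at $u(t,X_t)$, whereas the It\^o expansion of an arbitrary solution produces them at $Y_t$ and $Z_t$, and the linearization must be arranged so that this mismatch is absorbed into the Gronwall argument rather than left as an uncontrolled source term. The only other delicate point is the verification of the \emph{global} H\"older bounds of (vi) after composition with $\sg$, which is tedious but routine given (iv) and (vi).
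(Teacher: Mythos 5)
Your proposal is correct and takes essentially the same route as the paper: the paper's proof of Theorem \ref{link} consists precisely in invoking the four-step scheme of \cite{ma} (Theorem 4.1), once the unique $\C^{1+\frac{\beta}2,2+\beta}_b$-solution of Cauchy problem \rf{cauchy} is supplied by Theorem \ref{lady} --- which is exactly your Steps 1--3, with your Step 1 transfer of hypotheses to the composed coefficients being the part the paper only asserts in a remark. One cosmetic slip: the chain rule should read $\partial_u\big(f(\cdot,\sigma\,\cdot)\big)=\partial_u f+(\partial_u\sigma)\,p\,\partial_p f$, i.e.\ a factor of $p$ is missing, but since $|p|\lt N$ on the region relevant to condition (vii) your verification is unaffected.
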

\begin{rem}
\rm The solution to FBSDE \rf{fbsde} is understood as in \cite{ma}.
\end{rem}
The proof of Theorem \ref{link} is exactly the same as the proof of Theorem 4.1 in \cite{ma},
where the latter result is known as the four step scheme. It relies exceptionally  
on the existence of the unique $\C^{1,2}_b$-solution to Cauchy problem \rf{cauchy}.
This implies that the assumptions of Theorem \ref{lady} guarantee the existence of a unique 
solution to FBSDE \rf{fbsde}. These assumptions turn out to be more general than in \cite{ma},
but they are restricted to the case of just one PDE.
Remark, that the Cauchy problem for systems of PDEs was not actually solved in \cite{lady},
so the authors of \cite{ma} had to fill this gap imposing own assumptions. However, for the case
of just one PDE, the Cauchy problem is solved in \cite{lady}, and the result is represented 
by Theorem 8.1 in Section V (p. 495), so we make use of its more general assumptions.

\subsection{The Malliavin derivative}

Here we describe the elements from the Malliavin calculus  that we  need in the paper.  
We refer the reader to \cite{nualart} for a more complete exposition. 

Consider ${\mathcal{H}}$ a real separable Hilbert space and $(B (\varphi), \varphi\in{\mathcal{H}})$ an isonormal Gaussian process \index{Gaussian process} on a probability space $(\Omega, A, P)$, which is a centered Gaussian family of random variables such that 
$\mathbb{E}\left( B(\varphi) B(\psi) \right)  = \langle\varphi, \psi\rangle_{{\mathcal{H}}}$.

We denote by $D$  the Malliavin  derivative operator that acts on smooth functions of the form $F=g(B(\varphi _{1}), \ldots , B(\varphi_{n}))$ ($g$ is a smooth function with compact support and $\varphi_{i} \in {\mathcal{H}}, i=1,...,n$):
\begin{equation*}
DF=\sum_{i=1}^{n}\frac{\partial g}{\partial x_{i}}(B(\varphi _{1}), \ldots , B(\varphi_{n}))\varphi_{i}.
\end{equation*}
It can be checked that the operator $D$ is closable from $\mathcal{S}$ (the space of smooth functionals as above) into $ L^{2}(\Omega; \mathcal{H})$ and it can be extended to the space $\mathbb{D}^{1,p}$ which is the closure of $\mathcal{S}$ with respect to the norm
\begin{equation*}
\Vert F\Vert _{1,p}^{p} = \mathbb{E} |F|^{p} + \mathbb{E} \Vert DF\Vert _{\mathcal{H}} ^{p}. 
\end{equation*}
In our paper, $\mathcal{H} = L_2([0,T])$ and $B(\ffi) = \int_0^T \ffi(t) dB_t$. 

\subsection{Gaussian density estimates} 
Theorem \ref{privault-dung} below is an important tool that we will use to obtain the existence of densities
and density estimates. It  was proved in \cite{privault} (Theorem 2.4).
\begin{thm}
\lb{privault-dung}
 Let $F \in  D^{1,2}$ be a random variable such that  
 \aaa{
 \lb{condition}
 0 < l \lt  \int_0^\infty D_sF \, \E[D_s F |\mc F_s]ds \lt  L \quad \text{a.s.},
}
where $l$ and $L$  are constants. Then, 
$F$ possesses a density $p_F$ with respect to the Lebesgue measure. Moreover,
for almost all $x\in\Rnu$, the density $p_F$ satisfies 
\aa{     
\frac{\E |F-\E[F]|}{2L} \exp\Big(-\frac{\big(x-\E[F]\big)^2}{2l}\Big) \lt 
p_F(x) \lt \frac{\E |F-\E[F]|}{2l} \exp\Big(-\frac{\big(x-\E[F]\big)^2}{2L}\Big).
}
Furthermore, for all $x>0$, the tail probabilities satisfy
\aa{     
\PP(F \gt x) \lt \exp\Big(-\frac{\big(x-\E[F]\big)^2}{2L}\Big) 
\quad \text{and} \quad  \PP(F \lt -x) \lt \exp\Big(-\frac{\big(x+\E[F]\big)^2}{2L}\Big). 
}
\end{thm}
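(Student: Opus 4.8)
The plan is to reduce the statement to the Nourdin--Viens density formula in the Brownian setting and then read off the three conclusions from the bounds in \rf{condition}. First I would center the variable: replacing $F$ by $\tilde F = F - \E[F]$ leaves the Malliavin derivative unchanged, since $D_s\tilde F = D_s F$, so the integral in \rf{condition} is unaffected and it suffices to prove centered versions of all three assertions for a variable with $\E[\tilde F]=0$, re-centering $x\mto x-\E[F]$ at the very end. The starting point is the observation that, writing $u_s = \E[D_s F\mid \mc F_s]$, the Clark--Ocone formula gives $\tilde F = \dl(u) = \intl_0^T u_s\,dB_s$, so that the integral appearing in \rf{condition} is exactly the inner product $\langle D\tilde F, u\rangle_{\mc H}$ in $\mc H = L_2([0,T])$; thus \rf{condition} reads $0<l\lt \langle D\tilde F, u\rangle_{\mc H}\lt L$ almost surely.

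Second, I would establish the integration-by-parts identity on which everything rests. For a bounded $\C^1$ function $\ffi$ with bounded derivative, the duality between the divergence $\dl$ and the derivative $D$, combined with the chain rule $D\ffi(\tilde F)=\ffi'(\tilde F)\,D\tilde F$, gives $\E[\tilde F\,\ffi(\tilde F)] = \E[\dl(u)\,\ffi(\tilde F)] = \E[\langle u, D\ffi(\tilde F)\rangle_{\mc H}] = \E[\ffi'(\tilde F)\,\langle D\tilde F, u\rangle_{\mc H}]$. Introducing the conditional expectation $g(x) = \E\big[\langle D\tilde F, u\rangle_{\mc H}\,\big|\,\tilde F = x\big]$, which inherits $l\lt g(x)\lt L$ from \rf{condition}, and conditioning on $\tilde F$ turns this into the clean relation $\E[\tilde F\,\ffi(\tilde F)] = \E[\ffi'(\tilde F)\,g(\tilde F)]$.

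Third, I would deduce absolute continuity and the explicit density. Tested against a sufficiently rich class of $\ffi$, the identity above forces the law of $\tilde F$ to have a density $\rho$ --- the strict lower bound $l>0$ is what excludes atoms --- and, rewriting the identity as $\int x\,\ffi(x)\rho(x)\,dx = \int \ffi'(x)\,g(x)\rho(x)\,dx$ and integrating by parts in $x$, one obtains the distributional equation $\frac{d}{dx}\big(g(x)\rho(x)\big) = -x\,\rho(x)$. Solving it and fixing the constant by letting $\ffi$ approximate $\sign$ yields the Nourdin--Viens formula
\[
\rho(x) = \frac{\E|\tilde F|}{2\,g(x)}\,\exp\!\Big(-\intl_0^x \frac{y}{g(y)}\,dy\Big).
\]
Since $l\lt g\lt L$ gives $\frac{x^2}{2L}\lt \int_0^x \frac{y}{g(y)}\,dy\lt \frac{x^2}{2l}$ for all $x$, inserting these bounds together with $\frac1{2L}\lt \frac1{2g(x)}\lt \frac1{2l}$, and then substituting $x\mto x-\E[F]$, produces precisely the two-sided density estimate claimed in the theorem.

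For the tail bounds I would bypass the density and use the identity of the second step with the unbounded test function $\ffi(x)=e^{\lambda x}$ (justified by truncation). Setting $M(\lambda)=\E[e^{\lambda\tilde F}]$, it gives $M'(\lambda)=\E[\tilde F e^{\lambda\tilde F}] = \lambda\,\E[e^{\lambda\tilde F}g(\tilde F)]\lt \lambda L\,M(\lambda)$ for $\lambda\gt 0$, whence $\ln M(\lambda)\lt \lambda^2 L/2$, i.e. $\E[e^{\lambda\tilde F}]\lt e^{\lambda^2 L/2}$. A Chernoff estimate optimized at $\lambda=t/L$ then yields $\PP(\tilde F\gt t)\lt e^{-t^2/(2L)}$ for $t>0$; applying the same argument to $-\tilde F$ controls the lower tail, and re-centering $\tilde F = F-\E[F]$ gives the two stated inequalities. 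The step I expect to be the genuine obstacle is the third one: making rigorous the passage from the integration-by-parts identity to absolute continuity and to the ODE for $\rho$ --- in particular the well-definedness of the conditional density $g$, the choice of admissible test functions, and the normalizing constant --- whereas the centering, the duality computation, and the Chernoff argument are routine.
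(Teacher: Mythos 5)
Your proposal is mathematically sound, but you should be aware that it proves far more than the paper does: the paper contains no proof of Theorem \ref{privault-dung} at all. The result is imported from \cite{privault} (their Theorem 2.4, stated for centered random variables), and the only argument the paper itself supplies is the recentering remark following the theorem, namely that $D_s(F-\E[F])=D_sF$ leaves condition \rf{condition} unchanged and that $p_F(x)=p_{F-\E[F]}(x-\E[F])$. Your first paragraph reproduces exactly this remark; everything after it is a reconstruction of the proof of the cited theorem, and it is the standard one: the Clark--Ocone representation $F-\E[F]=\dl(u)$ with $u_s=\E[D_sF\,|\,\mc F_s]$, the duality/chain-rule identity $\E[\td F\ffi(\td F)]=\E[\ffi'(\td F)g(\td F)]$ with $g(x)=\E\big[\int_0^\infty D_sF\,\E[D_sF|\mc F_s]\,ds\,\big|\,\td F=x\big]$, and then the density formula of Nourdin and Viens with this $g$ in place of their $\E[\langle DF,-DL^{-1}F\rangle\,|\,F=x]$ --- a legitimate substitution, since their derivation of the formula uses nothing but that integration-by-parts identity. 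The step you flag as the genuine obstacle (from the identity to absolute continuity and the distributional ODE for $\rho$) is precisely the content of Nourdin--Viens' Theorem 3.1 and Corollary 3.3, and your sketch of it is the correct route: the lower bound $l>0$ gives $\PP(\td F\in[a,b])\lt (b-a)\,\E|\td F|/l$, hence a bounded density, after which the ODE and the normalization by a Heaviside limit go through. For the exponential-moment step, note that the truncation you invoke is non-circular only because the Gaussian upper bound on the density is already available at that point, which yields $\E[e^{\lambda\td F}]<\infty$; with that ordering the Chernoff argument is fine. In short, the paper's approach buys brevity (citation plus a one-line recentering), while yours buys a self-contained proof of the imported result.

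One caveat that your own derivation exposes: the Chernoff step yields $\PP(\td F\gt t)\lt e^{-t^2/(2L)}$ only for $t\gt 0$, so after recentering the upper-tail estimate holds for $x\gt\E[F]$ and the lower-tail one for $x\gt-\E[F]$, not literally ``for all $x>0$'' as the recentered statement asserts; for $0<x<\E[F]$ the claimed upper-tail bound can fail (consider a Gaussian with large positive mean, which satisfies \rf{condition} with $l$ and $L$ near its variance). This is a slight overstatement introduced by the paper's recentering of the centered result in \cite{privault}, not a gap in your argument.
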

\begin{rem}
\rm Theorem \ref{privault-dung} was, in fact, obtained in \cite{privault} for centered random variables $F$.
However, since  $p_F(x) = p_{F-\E[F]}(x-\E[F])$, where $p_{F-\E[F]}$ 
is the density function for $F-\E[F]$, and condition \rf{condition} does not change if we replace $F$ with $F-\E[F]$,
the statement of Theorem \ref{privault-dung} follows immediately.
\end{rem}

\subsection{Malliavin derivatives of solutions to SDEs}
Consider SDE \rf{sde}, where the coefficients are given by \rf{coeff-sde} and $u(t,x)$ is the unique
$\C^{1+\frac{\beta}2,2+\beta}_b([0,T]\x \Rnu)$-solution to problem \rf{final}. It is known that
(see, e.g., \cite{nualart}) if the  coefficients  of an SDE are differentiable with
bounded derivatives, its solution is Malliavin differentiable. 
It is also known that if, additionally, $\td\sg$ 
is bounded away from zero, then,
 by means of Lamperti's transform $\eta(t,x) = \int_0^x \frac1{\td\sg(t,\xi)} d\xi$ (\cite{lamperti}),
the Malliavin derivative of $X_t$ can be explicitly computed. 
The algorithm is well known (see, e.g., \cite{detemple}), so we skip the computation, and write the final result:
\aaa{
\lb{dx}
D_r X_t = \td\sg(t,X_t) e^{\int_r^t \psi(s,X_s) ds},
}
where $\psi(s,x) = \frac{2\td f(s,x)\pl_x\td\sg(s,x)}{\td\sg^2(s,x)}-\frac{\pl_x\td f(s,x) + 
\td f(s,x) \pl^2_{xx}\td \sg(s,x)+ \pl_s\td\sg(s,x)}{\td\sg(s,x)} - \frac12\pl^2_{xx}\td \sg(s,x)\td\sg(s,x)$.

\section{Results}

In this section, we prove that the laws of $X_t$, $Y_t$, and $Z_t$ possess densities 
with respect to the Lebesgue measure, and obtain
Gaussian estimates for the densities and tail probabilities of these laws.

In what follows, we will make use of assumptions (A1)--(A9) below. 
Assumptions (A1)--(A3) are required to obtain density estimates for the law  of $X_t$.
\bi
\item[\bf (A1)] For all $(t,x,u) \in [0,T]\x \Rnu \x \Rnu$, $\nu(|u|) \lt \sg(t,x,u) \lt \mu(|u|)$,
where $\nu$ and $\mu$ are non-increasing and, respectively, non-decreasing positive functions; 
\item[\bf (A2)] the functions $f$, $g$, and $h$ satisfy conditions 
(ii), (iii), and (v)--(vii) of Theorem \ref{lady};
\item[\bf (A3)] 
the derivatives $\pl_x \sg$, $\pl_u \sg$, 
exist and are 
H\"older continuous in $t$, $x$,  $u$ with exponents $\frac{\beta}2$, 
$\beta$, $\beta$, respectively, and globally bounded H\"older constants; further, $\pl_s\sg$ exists, and
$|\sg| +  |\pl_s \sg| + |\pl_x \sg| + |\pl_u \sg| \lt \al$ for some constant $\al>0$.
\ei
Assumptions (A4) and (A5) below should be added to (A1)--(A3) to obtain density estimates for the law of $Y_t$.
Remark that under (A1)--(A3), the solution $u$ to problem \rf{final} possesses a bound for  $|\pl_xu|$.
This bound will be denoted by $M_1$. Also, we recall that the bound for $|u|$ is denoted by $M$.
\bi
\item[\bf (A4)]
In the region $[0,T] \x \mc R$, where $\mc R =\Rnu \x \{|u|\lt M\}\x \{|p| \lt M_1\}$,
$\pl_x f$, $\pl_x g$, $\pl_u f$, $\pl_u g$, $\pl_p f$, $\pl_p g$ exist, are bounded
 and H\"older continuous in $t$, $x$, $u$, $p$ with exponents $\frac{\beta}2$, $\beta$,
$\beta$, $\beta$, respectively, and bounded H\"older constants;
  \item[\bf (A5)] 
either (a) or (b) holds: 
\aa{
&\text{(a)\,} \quad 
h'\gt 0 \quad \text{and} \; \inf_{(x,u,p)\in \mc R} \pl_x g(t,x,u,p) > 0 \; \text{for all} \; t\in (0,T];\\
&\text{(b)\,} \quad
h' \lt 0\quad \text{and} \;  \sup_{(x,u,p)\in \mc R} \pl_x g(t,x,u,p) < 0  \; \text{for all} \; t\in (0,T].
}
\ei
 Finally, to estimate the density of the law of $Z_t$, 
  assumption (A5) should be replaced with assumption (A5') below, and, additionally, (A6)--(A9) should be in force.
  \bi
\item[\textbf{(A5}\textrm{'}\textbf{)}] 
For all $(t,x,u,p) \in (0,T] \x\mc R$, $\pl_x g \gt 0, \; h'\gt 0$. 
\ei
Further, (A6)--(A9) read:
\bi
\item[\bf (A6)] $\pl_x \sg\gt 0$, $\pl_u \sg\gt 0$ on $[0,T]\x\Rnu \x \{|u|\lt M\}$;
\item[\bf (A7)]  $\pl^2_{px} f$, $\pl^2_{pu}f$, $\pl^2_{pp} f$, $\pl^2_{xx}f$,
$\pl^2_{xu} f$, $\pl^2_{uu} f$, $\pl^2_{px} g$, $\pl^2_{pu}g$, $\pl^2_{pp} g$, $\pl^2_{xx}g$,
$\pl^2_{xu} g$, $\pl^2_{uu} g$ exist on $[0,T]\x \mc R$, are bounded and H\"older continuous in $t$, $x$, $u$, $p$ with 
exponents $\frac{\beta}2$, $\beta$, $\beta$, $\beta$, respectively, and bounded H\"older constants;
\item[\bf (A8)] 
for all $t\in (0,T]$, 
$\inf_{(x,u,p)\in \mc R}\pl^2_{xx} g >0$ and $h''\gt 0$;
\item[\bf (A9)]  the following inequalities hold on
$[0,T]\x\mc R$
\eqq{
\pl^2_{xx} f + 2\pl^2_{xu} g + \pl_p g \pl^2_{xx} \sg + \pl^2_{px} g \, \pl_x \sg \gt 0, \\
\pl^2_{uu} g + 2\pl^2_{ux} f + 2\pl^2_{px}g \, \pl_u \sg + 2\pl^2_{pu}g\, \pl_x \sg + 2\pl^2_{px} f \, \pl_x \sg 
+ 2\pl_p g \pl^2_{xu} \sg +\pl^2_{pp} g(\pl_x\sg)^2\gt 0,\\
\pl^2_{uu} f  + 2\pl^2_{pu} g\, \pl_u\sg + 2\pl^2_{pu}f \pl_x \sg + 2\pl^2_{px} f \pl_u \sg 
+ 2\pl^2_{pp} g\, \pl_x\sg\pl_u\sg + 2\pl_p f \pl^2_{xu}\sg + \pl_p g \pl^2_{uu}\sg \\
\hspace{10cm} + \pl^2_{pp} f (\pl_x\sg)^2  \gt 0,\\
2\pl^2_{up} f \pl_u\sg + 2\pl^2_{pp}f \pl_x\sg \pl_u\sg + \pl_p f \pl^2_{uu}\sg + \pl^2_{pp} g (\pl_u\sg)^2 \gt 0, \\
\pl^2_{pp} f  \gt 0.
}
\ei

\subsection{Density estimates for the law of $X_t$}
\begin{thm}
Let (A1)--(A3) hold. Then, the law of $X_t$ has a density $p_{X_t}$ with respect to the Lebesgue measure.
Moreover, for almost all $x\in \Rnu$, $p_{X_t}$ satisfies the estimate
\aaa{  
  \lb{est-x} 
\frac{\E |X_t-\E[X_t]|}{2\Xi(t)} \exp\Big(-\frac{\big(x-\E[X_t]\big)^2}{2\xi(t)}\Big) \lt 
p_{X_t}(x) \lt \frac{\E |X_t-\E[X_t]|}{2\xi(t)} \exp\Big(-\frac{\big(x-\E[X_t]\big)^2}{2\Xi(t)}\Big),
}
where $\xi(t)$ and $\Xi(t)$ are positive functions that can be computed explicitly. 
Further, for all $x>0$, the tail probabilities of $X_t$ satisfy
\aaa{
\lb{tail-x} 
\PP(X_t > x) \lt \exp\Big(-\frac{\big(x-\E[X_t]\big)^2}{2\Xi(t)}\Big) \quad \text{and} \quad
\PP(X_t< -x) \lt \exp\Big(-\frac{\big(x + \E[X_t]\big)^2}{2\Xi(t)}\Big).
}
\end{thm}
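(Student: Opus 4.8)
The plan is to apply Theorem \ref{privault-dung} to the random variable $F=X_t$, taking the constants $l$ and $L$ there to be the desired functions $\xi(t)$ and $\Xi(t)$; everything then reduces to controlling, uniformly from above and below, the random quantity $\int_0^\infty D_sX_t\,\E[D_sX_t\mid\mc F_s]\,ds$ of condition \rf{condition}. To set this up I would first invoke Theorem \ref{link}: under (A1)--(A3) the FBSDE \rf{fbsde} has a unique adapted solution, $X_t$ solves the forward SDE \rf{sde} with coefficients \rf{coeff-sde}, and the associated $u$ is of class $\C^{1+\frac\beta2,2+\beta}_b([0,T]\x\Rnu)$. In particular $u$, $\pl_x u$ and $\pl^2_{xx}u$ are globally bounded, so, writing $M=\|u\|_\infty$ and $M_1=\|\pl_xu\|_\infty$, the coefficients $\td f$ and $\td\sg$ are bounded and $\C^1$ in $x$ with bounded derivatives. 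Hence $X_t\in\mathbb D^{1,2}$ by the standard theory (\cite{nualart}), and since $\td\sg(t,x)=\sg(t,x,u(t,x))\gt\nu(M)>0$ by (A1), Lamperti's transform applies and $D_rX_t$ is given by the explicit formula \rf{dx}.

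The core step is to derive deterministic two-sided bounds on $D_sX_t$. From (A1) one has $\nu(M)\lt\td\sg(t,X_t)\lt\mu(M)$, and, collecting the bounds on $\td f$, $\td\sg$ and their derivatives furnished by (A1)--(A3) together with those on $u,\pl_xu,\pl^2_{xx}u$, one obtains a constant $K$ with $|\psi(s,x)|\lt K$ on $[0,t]\x\Rnu$. Since $D_sX_t=0$ for $s>t$ by adaptedness, formula \rf{dx} then gives, for $0\lt s\lt t$,
\[
\nu(M)\,e^{-K(t-s)}\lt D_sX_t\lt\mu(M)\,e^{K(t-s)},
\]
and in particular $D_sX_t>0$ almost surely. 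As these bounds are deterministic they are inherited by the conditional expectation $\E[D_sX_t\mid\mc F_s]$; multiplying the two estimates and integrating over $s\in[0,t]$ yields
\[
\xi(t):=\nu(M)^2\,\frac{1-e^{-2Kt}}{2K}\lt\int_0^\infty D_sX_t\,\E[D_sX_t\mid\mc F_s]\,ds\lt\mu(M)^2\,\frac{e^{2Kt}-1}{2K}=:\Xi(t),
\]
where $\xi(t),\Xi(t)>0$ for $t>0$ and are explicitly computable. Thus \rf{condition} holds with $l=\xi(t)$, $L=\Xi(t)$, and Theorem \ref{privault-dung} delivers at once the density bounds \rf{est-x} and the tail estimates \rf{tail-x}.

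The main obstacle is the uniform boundedness of $\psi$ in the second step. Since $\psi$ involves $\td f$, $\pl_x\td f$, $\pl_x\td\sg$, $\pl^2_{xx}\td\sg$, $\pl_s\td\sg$ and $\td\sg^{-1}$, one must track, through the chain rule, how each of these is controlled by the growth and boundedness hypotheses (v)--(vii) on $f$ and $g$, by the bounds in (A3) on the derivatives of $\sg$, and by the $\C^{2+\beta}$-in-$x$ regularity of $u$ coming from Theorem \ref{link}. Keeping $\td\sg$ bounded away from zero is precisely what makes the lower constant $\xi(t)$ strictly positive, and is therefore essential for the \emph{existence} of the density rather than merely for the tail bound; the rest of the argument is then a direct specialization of Theorem \ref{privault-dung}.
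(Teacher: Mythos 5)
Your proposal is correct and follows essentially the same route as the paper's proof: both use the explicit Lamperti-based formula \rf{dx} for $D_rX_t$, bound $\psi$ uniformly via the boundedness of $u$, $\pl_xu$, $\pl^2_{xx}u$ and of $\td f$, $\td\sg$ and their derivatives under (A1)--(A3), deduce two-sided deterministic bounds $\nu(M)\lt\td\sg\lt\mu(M)$ times an exponential factor, and then apply Theorem \ref{privault-dung}. The only (harmless) difference is that you keep the $(t-s)$-dependence in the exponential and integrate exactly, obtaining the slightly sharper constants $\nu(M)^2\frac{1-e^{-2Kt}}{2K}$ and $\mu(M)^2\frac{e^{2Kt}-1}{2K}$, whereas the paper uses the cruder $s$-independent bounds $\xi(t)=t\nu(M)^2e^{-2M_\psi t}$ and $\Xi(t)=t\mu(M)^2e^{2M_\psi t}$.
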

\begin{proof}
Note that, under (A1)--(A3),  the solution $u$ to problem \rf{final} and its derivative $\pl_x u$, $\pl_s u$, and $\pl^2_{xx} u$ 
are bounded. Hence,
$\td f$, $\td\sg$, $\pl_x\td f$, $\pl_s\td\sg$, $\pl_x \td\sg$, and $\pl^2_{xx} \td\sg$ are bounded as well
(functions $\td f$ and $\td \sg$ are defined by \rf{coeff-sde}). 
Further, by (A3), on $[0,T]\x \Rnu$, $\td\sg(t,x)\gt\nu(M)$, where by $M$ is the bound for $|u|$.
Therefore,  the function $\psi$ in 
\rf{dx} is bounded. Let $M_\psi$ be its bound. Formula \rf{dx} allows us to estimate  $D_rX_t$ as follows
\aaa{
\lb{dx-est}
\nu(M) e^{-M_\psi t}\lt D_rX_t \lt \mu(M) e^{M_\psi t} \quad \text{a.s.}
} 
This implies that
\aa{
t \nu(M)^2 e^{-2M_\psi t} \lt \int_0^t D_r X_t \E[D_r X_t | \mc F_r] dr \lt t\mu(M)^2 e^{2M_\psi t}.
}
Remark that $D_r X_t = 0$ if $r>t$. By Theorem \ref{privault-dung}, the law of $X_t$ has a density with respect to 
the Lebesgue measure and estimate \rf{est-x} holds with $\xi(t) = t \nu(M)^2 e^{-2M_\psi t}$
and $\Xi(t) = t\mu(M)^2 e^{2M_\psi t}$. Moreover, the tail probabilities of $X_t$ satisfy \rf{tail-x}. 
\end{proof}
\subsection{Density estimates for the law of $Y_t$}
To estimate the density for $Y_t$, we will use the formula $Y_t = u(t,X_t)$, where $u$
is the unique $\C^{1+\frac{\beta}2,2+\beta}_b$-solution to problem \rf{final}. This formula immediately
implies that $Y_t$ is Malliavin differentiable and $D_rY_t = \pl_x u(t,X_t) D_r X_t$.

Below, we prove that under (A1)--(A5), there exists a positive function $m(t)$, $t\in [0,T]$,
such that 
\eqn{
\lb{D-bound}
\text{either} \;\;  &\inf_{x\in\Rnu} \pl_x u(t,x) \lt -m(t) \; &&\text{for all}  \; t\in [0,T] \\
\text{or} \quad
&\sup_{x\in\Rnu} \pl_xu(t,x) \gt m(t) \; &&\text{for all}  \; t\in [0,T]. 
}
To this end,  we obtain a PDE for the function $v= \pl_x u$. 
We start by considering linear PDE \rf{pde-lin} and prove that we can differentiate it with respect to $x$.
The following result can be viewed as a corollary of Theorem \ref{fundamental}.
\begin{pro}
\lb{pr1}
Assume PDE \rf{pde-lin} is uniformly parabolic. 
Let the coefficients of $L$ and the function $g$ be of class $\C^{\frac{\beta}2,1+\beta}_b([0,T]\x \Rnu)$, 
$\beta\in (0,1)$.
Further, let the initial condition $\ffi$ be of class $\C^{2+\beta}_b(\Rnu)$.  
Then, the solution $u(t,x)$ of \rf{pde-lin}, whose existence was established by Theorem \ref{fundamental},
belongs to class $\C^{1, 3}_b([0,T]\x\Rnu)$, and its derivative $v(t,x) = \pl_x u(t,x)$
is the unique solution to 
\eq{
\lb{d1}
Lv + \pl_x a \, \pl^2_{xx} u + \pl_x b\, \pl_x u + \pl_x c  \, u + \pl_x g  = 0,\\
v(0,x) = \ffi'(x).
}
In particular, we can differentiate PDE  \rf{pde-lin} w.r.t. $x$.
\end{pro}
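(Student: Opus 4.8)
The plan is to read \rf{d1} as the linear inhomogeneous Cauchy problem $Lv+G=0$, $v(0,\cdot)=\ffi'$, for the single unknown $v$, whose inhomogeneous term
\[
G(t,x)=\pl_x a\,\pl^2_{xx}u+\pl_x b\,\pl_x u+\pl_x c\,u+\pl_x g
\]
is a \emph{known} function of $(t,x)$, assembled from the solution $u$ already furnished by Theorem~\ref{fundamental}. Before \rf{d1} can be handled by Theorem~\ref{fundamental} I must verify $G\in\C^{0,\beta}_b$, and for this I would first upgrade the regularity of $u$: since the coefficients of $L$ and $g$ lie in $\C^{\frac\beta2,1+\beta}_b\subset\C^{\frac\beta2,\beta}_b$ and $\ffi\in\C^{2+\beta}_b$, Theorem~\ref{hold-unique} applies to \rf{pde-lin} and yields $u\in\C^{1+\frac\beta2,2+\beta}_b$; in particular $u$, $\pl_x u$ and $\pl^2_{xx}u$ are bounded and H\"older-$\beta$ in $x$, uniformly in $t$. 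As the hypothesis also gives $\pl_x a,\pl_x b,\pl_x c,\pl_x g\in\C^{\frac\beta2,\beta}_b\subset\C^{0,\beta}_b$, each summand of $G$ is a product of two bounded, $x$-H\"older-$\beta$ factors, whence $G\in\C^{0,\beta}_b$.

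With $G\in\C^{0,\beta}_b$, $a\in\C^{\frac\beta2,\beta}_b$, $b,c\in\C^{0,\beta}_b$ and $\ffi'\in\C_b(\Rnu)$, Theorem~\ref{fundamental} applies verbatim to $Lv+G=0$, $v(0,\cdot)=\ffi'$ and produces a \emph{unique} $w\in\C^{1,2}_b([0,T]\x\Rnu)$ with the representation
\[
w(t,x)=\int_{\Rnu}\Gm(t,x,0,z)\ffi'(z)\,dz-\int_0^t\!\!\int_{\Rnu}\Gm(t,x,s,z)G(s,z)\,ds\,dz,
\]
where $\Gm$ is the same fundamental solution of $L$ that represents $u$. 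It then remains only to prove $w=\pl_x u$.

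To identify $w$ with $\pl_x u$ I would use difference quotients. Writing $u^h(t,x)=h^{-1}\big(u(t,x+h)-u(t,x)\big)$, subtracting \rf{pde-lin} at $x$ from the same equation at $x+h$ and dividing by $h$ shows that $u^h$ solves the linear problem with translated coefficients $a(t,x+h),b(t,x+h),c(t,x+h)$, inhomogeneous term $G^h=(\Delta_h a)\pl^2_{xx}u+(\Delta_h b)\pl_x u+(\Delta_h c)u+\Delta_h g$ (with $\Delta_h$ the $x$-difference quotient) and initial datum $\Delta_h\ffi$. As $h\to0$ the translated coefficients converge to $a,b,c$, the datum $\Delta_h\ffi\to\ffi'$ uniformly, and $G^h\to G$ locally uniformly while staying bounded, all because the relevant first $x$-derivatives are uniformly continuous. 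The fundamental solution of the translated operator is, by translation invariance, $\Gm(t,x+h,s,z+h)$, so it obeys the \emph{same} Gaussian bounds as $\Gm$ uniformly in $h$; representing $u^h$ through it and passing to the limit by dominated convergence gives $u^h(t,x)\to w(t,x)$ pointwise. Since also $u^h(t,x)\to\pl_x u(t,x)$ by definition of the derivative, I conclude $\pl_x u=w$.

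Finally, $\pl_x u=w\in\C^{1,2}_b$ forces $\pl^2_{xx}u=\pl_x w$ and $\pl^3_{xxx}u=\pl^2_{xx}w$ to exist, bounded and continuous; together with $\pl_t u$ coming from $u\in\C^{1,2}_b$ this gives $u\in\C^{1,3}_b([0,T]\x\Rnu)$, and $v=\pl_x u$ satisfies \rf{d1}, with uniqueness inherited from the uniqueness clause of Theorem~\ref{fundamental} for $Lv+G=0$. I expect the main obstacle to be the identification $\pl_x u=w$ of the third step: the formal differentiation of \rf{pde-lin} must be justified without presupposing the very third $x$-derivative one is constructing, and the clean route is the stability argument for the translated problems, whose technical core is the uniform Gaussian bounds on $\Gm$ and the convergence of the associated fundamental solutions. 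Everything else reduces to bookkeeping on H\"older norms.
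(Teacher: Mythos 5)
Your proposal is correct and takes essentially the same route as the paper: upgrade $u$ to $\C^{1+\frac{\beta}2,2+\beta}_b$ via Theorem \ref{hold-unique} so that the inhomogeneity is of class $\C^{0,\beta}_b$, solve \rf{d1} by Theorem \ref{fundamental}, and identify $v=\pl_x u$ by representing the difference quotients of $u$ through fundamental solutions and passing to the limit under the Gaussian bounds. The only cosmetic difference is in the algebraic split of the difference-quotient equation: the paper keeps the \emph{original} operator $L$ acting on $u_\Dl$ (pushing the increments into averaged coefficients $\td\pl_x a$, etc., evaluated against derivatives of $u$ at shifted points), so a single kernel $\Gm$ serves both representations, whereas your split produces the translated operator and requires the extra (easy) step that its fundamental solution is $\Gm(t,x+h,s,z+h)$ and converges to $\Gm$ as $h\to 0$.
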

\begin{proof}
Introduce the function $u_\Dl(t,x) = \frac{u(t, x+\Dl x) - u(t,x)}{\Dl x}$. Since $u$ is a solution
to \rf{pde-lin}, the linear PDE for $u_\Dl$ takes the form
\mmm{
\lb{pde-dl}
(Lu_\Dl)(t,x) =-\big( \td\pl_x a(t,x) \pl^2_{xx} u(t,x+\Dl x) + \td\pl_x b(t,x) \pl_x u(t,x+\Dl x) \\
+ \td\pl_x c(t,x) u(t,x+\Dl x) + \td\pl_x g(t,x)\big),
}
where $\td\pl_x$ is defined as follows: $\td\pl_x \phi(x) = \int_0^1 \pl_x\phi(x+\la\Dl x) d\la$.
Remark that, by Theorem \ref{hold-unique}, $u$ is of class $\C^{1+\frac{\beta}2, 2+\beta}_b$, 
and, therefore, by assumptions, the right-hand side of \rf{pde-dl} is of class $\C^{0,\beta}_b$.  
By Theorem \ref{fundamental},  the Cauchy problem consisting of PDE \rf{pde-dl} 
and the initial condition $u_\Dl(0,x) = \frac{\ffi(x+\Dl x) - \ffi(x)}{\Dl x}$ has a unique solution which takes the form
\mm{
 u_\Dl(t,x) = \int_{\Rnu}\Gm(t,x,0,z) u_\Dl(0,z)  dz - 
 \int_0^t \int_{\Rnu} \Gm(t,x,s,z) \big(\td\pl_x a(s,z) \pl^2_{xx} u(s,z+\Dl x) \\ 
 + \td\pl_x b(s,z) \pl_x u(s,z+\Dl x) + \td\pl_x c(s,z) u(s,z+\Dl x) + \td\pl_x g(s,z)\big) ds dz.
 }
 On the other hand, consider problem \rf{d1} w.r.t. $v$.
 By Theorem \ref{fundamental}, \rf{d1} has a unique solution $v(t,x)$ which takes the form
 \mm{
  v(t,x) = \int_{\Rnu}\Gm(t,x,0,z) \ffi'(z)  dz - 
 \int_0^t \int_{\Rnu} \Gm(t,x,s,z) \big(\pl_x a(s,z) \pl^2_{xx} u(s,z) \\ 
 + \pl_x b(s,z) \pl_x u(s,z) + \pl_x c(s,z) u(s,z) + \pl_x g(s,z)\big) ds dz.
 }
 Recalling that the fundamental solution $\Gm(t,x,s,z)$ possesses bounds by Gaussian densities \cite{friedman}, we conclude
 that as $\Dl x\to 0$, $u_\Dl(t,x) \to v(t,x)$. This means that
 $v=\pl_x u$. In particular, it means that the derivatives $\pl^3_{xxx} u$ and $\pl^2_{xt} u$ exist,
 and we can differentiate PDE \rf{pde-lin} w.r.t. $x$. 
 \end{proof}
 \begin{lem}
 \lb{ux-bound}
Let (A1)--(A5) hold, and let $u$ be the solution to problem \rf{final}
(whose existence, together with the existence of the bound $M_1$ for its gradient $\pl_x u$,
 was established under (A1)--(A3)).
 Then, there exists a positive function $m(t)$,
 such that one of the alternatives in \rf{D-bound} is fulfilled.
\end{lem}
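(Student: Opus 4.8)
The plan is to derive a linear parabolic equation for $v=\pl_x u$ and then read off both its sign and a quantitative lower bound from the signs of $h'$ and $\pl_x g$, using the comparison principle. Throughout I work with the time-changed function $\te(t,x)=u(T-t,x)$ solving Cauchy problem \rf{cauchy} and set $w=\pl_x\te$, so that $\pl_x u(t,x)=w(T-t,x)$ and a one-sided bound on $w$ translates directly into one of the alternatives in \rf{D-bound}.

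First I would regard \rf{cauchy} as a \emph{linear} equation for $\te$ with coefficients frozen along the fixed solution: $\bar a(t,x)=\tfrac12\sg^2(T-t,x,\te)$, $\bar b(t,x)=f(T-t,x,\te,\sg\pl_x\te)$, and source $\bar g(t,x)=g(T-t,x,\te,\sg\pl_x\te)$. Since $u\in\C^{1+\frac\beta2,2+\beta}_b$ (so $\te,\pl_x\te,\pl^2_{xx}\te$ are bounded and H\"older) and the first derivatives of $\sg,f,g$ are bounded and H\"older by (A1)--(A4), the chain rule shows $\bar a,\bar b,\bar g\in\C^{\frac\beta2,1+\beta}_b$. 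Proposition \ref{pr1} then applies, yielding $\te\in\C^{1,3}_b$ (hence $w=\pl_x\te\in\C^{1,2}_b$) together with the differentiated equation \rf{d1} for $w$.

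The decisive bookkeeping step is to rewrite \rf{d1} in the form $\bar a\,\pl^2_{xx}w+\tilde B\,\pl_x w+\tilde C\,w-\pl_t w+\widetilde{\pl_x g}=0$, $w(0,x)=h'(x)$, where $\widetilde{\pl_x g}(t,x)=\pl_x g(T-t,x,\te,\sg\pl_x\te)$ is the partial derivative of $g$ in its explicit $x$-slot evaluated along the solution. The point is that when $\pl_x$ is distributed over \rf{cauchy}, every resulting term is proportional to $\pl_x\te=w$ or to $\pl^2_{xx}\te=\pl_x w$, \emph{except} the single term $\widetilde{\pl_x g}$; this is because $g$ enters \rf{cauchy} undifferentiated (as $+g$, not as $g\,\pl_x\te$), so its $x$-partial stands alone, whereas $f_x$ is forced to multiply $\pl_x\te$. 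Folding the $w$- and $\pl_x w$-proportional terms into the coefficients $\tilde B,\tilde C$, which are bounded by (A1)--(A4), leaves $\widetilde{\pl_x g}$ as the only inhomogeneity. Writing $\tilde L w=\bar a\,\pl^2_{xx}w+\tilde B\,\pl_x w+\tilde C w-\pl_t w$, we thus have $\tilde L w=-\widetilde{\pl_x g}$.

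Under case (a) of (A5), $\widetilde{\pl_x g}>0$ and $w(0,\cdot)=h'>0$, hence $\tilde L w\lt 0$ and $w(0,\cdot)\gt 0$, so Theorem \ref{comparison} gives $w\gt 0$. To upgrade to a quantitative bound I would use a space-independent barrier: with $K$ a bound for $|\tilde C|$ and $\kappa(t)=\inf_{\mc R}\pl_x g(T-t,\cdot)>0$, let $\psi(t)=e^{-Kt}\int_0^t e^{Ks}\kappa(s)\,ds$, which solves $\psi'+K\psi=\kappa$, $\psi(0)=0$, and satisfies $\psi(t)>0$ for $t>0$. A direct computation gives $\tilde L(w-\psi)\lt 0$ and $(w-\psi)(0,\cdot)=h'\gt 0$, so $w\gt\psi$ by Theorem \ref{comparison}; translating back, $\inf_x\pl_x u(t,x)\gt m_1(t)>0$ for $t<T$, while at $t=T$ one has $\sup_x\pl_x u(T,\cdot)=\sup_x h'>0$. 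As $\sup_x\pl_x u(t,x)$ is continuous (by uniform continuity of $\pl_x u$) and strictly positive on the compact interval $[0,T]$, it is bounded below by a positive constant, which gives the second alternative in \rf{D-bound}. Case (b) of (A5) is symmetric, applying the argument to $-w$ and yielding the first alternative. The main obstacle is the third step: verifying rigorously, via Proposition \ref{pr1}, that $\pl_x g$ is the \emph{only} inhomogeneous term while all remaining coefficients stay bounded, together with the barrier construction that must remain positive despite the possible degeneration of $\inf_x h'$ at the terminal time.
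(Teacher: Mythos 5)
Your proof is correct and follows essentially the same route as the paper's: rewrite \rf{final} with coefficients frozen along $u$ as a linear equation, differentiate in $x$ via Proposition \ref{pr1} so that $-\pl_x g$ is the sole inhomogeneity (this is exactly the paper's equation \rf{eq-d} with coefficients \rf{abc}), and conclude by Theorem \ref{comparison} applied to $w$ minus a time-dependent barrier vanishing at the initial time --- your explicit solution of $\psi'+K\psi=\kappa$, $\psi(0)=0$, is simply a concrete instance of the paper's ``sufficiently small'' choice $m(t)=\int_0^t\td m(s)\,ds$. Your extra endpoint step (continuity of $\sup_{x}\pl_x u(t,\cdot)$ to force positivity on all of $[0,T]$) is a refinement the paper omits, since the paper's own $m$ also vanishes at the terminal time in the original time variable; just be aware it needs $\sup_x h'>0$, i.e.\ the strict reading of $h'\gt 0$ (in the paper's notation $\gt$ means $\geqslant$), so it silently excludes the degenerate case $h'\equiv 0$.
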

\begin{proof}
Problem \rf{final} can be rewritten as a linear problem as follows
\eq{
\lb{eq-main}
\frac12 \td\sg^2(t,x) \pl^2_{xx} u + \td f(t,x)\pl_x u + \td g(t,x) + \pl_t u = 0,\\
u(T,x) = h(x),
} 
where   
$\td g(t,x) = g(t,x,u(t,x), \pl_xu(t,x)\sg(t,x,u(t,x)))$, and $\td\sg$ and $\td f$ are defined by \rf{coeff-sde}.
By Proposition \ref{pr1}, we can differentiate PDE
\rf{eq-main} w.r.t. $x$. By doing so, we obtain the following PDE for $v(t,x) = \pl_x \te(t,x)  = \pl_x u(T-t,x)$
\aaa{
\lb{eq-d}
a(t,x,\te) \, \pl^2_{xx}v + b(t,x,\te,\pl_x \te) \pl_x v + c(t,x,\te,\pl_x \te) v - \pl_t v  = - \pl_x g(t,x,\te,\pl_x \te),
}
where  $\te(t,x) = u(T-t,x)$, and the functions $v$, $\te$, $\pl_x v$, and $\pl_x \te$ 
are everywhere evaluated at $(t,x)$. Furthermore, $a$, $b$, and $c$ are defined as follows 
\eq{
\lb{abc}
a(t,\ldots) = \frac12\sg^2(T-t,\ldots); \qquad
b(t,\ldots) = \big(\pl_x a + \pl_p f\sg \pl_x u + \pl_p g\, \sg + f\big)(T-t,\ldots); \\
c(t,\ldots) = \big(\pl_x f + \pl_u g + \pl_p g \pl_x\td\sg + \pl_u f \pl_x u + \pl_p f \pl_x \td\sg \pl_x u\big)(T-t,\ldots),
}
where the dots are used to simplify notation and are to be substituted with $x,\te(t,x), \pl_x\te(t,x)$.
Let $\mc L$ be the partial
differential operator defined by the left-hand side of \rf{eq-d}, i.e., 
\aa{
\mc L v = a\, \pl^2_{xx} v + b\, \pl_x v + c \,v - \pl_t v.
}
If (A5)-(a) is in force, define the function
$\td v(t,x) = v(t,x) - m(t)$, where $m(t) = \int_0^t \td m(s) ds$ and $\td m(s)$ is a positive sufficiently 
small function whose choice is explained below. Then,
\aa{
\mc L \td v = - \pl_x g - c\, m(t) + \td m(t) \lt -\inf_{(x,u,p)\in \mc R} \pl_x g(t,x,u,p) - c\, m(t) + \td m(t).
}
Remark that by (A4), $c$ is bounded.
Therefore, if $\td m(t)$ and $m(t)$ are sufficiently small, then $\mc L \td v \lt 0$. 
Further, since $m(0) = 0$, then $\td v(0,x) \gt 0$.
By Theorem \ref{comparison}, $\td v(t,x) \gt 0$, and, therefore $v(t,x) \gt m(t)$ on $[0,T]\x \Rnu$. 
If (A5)-(b) is in force, then, defining the function $\td v(t,x) = v(t,x) + m(t)$, we obtain 
that $\mc L \td v = - \pl_x g + c m(t) - \td m(t)$. By a similar argument, we conclude that $v(t,x) \lt -m(t)$ on 
$[0,T]\x \Rnu$. The lemma is proved.
\end{proof}
As a corollary of Theorem \ref{privault-dung} and Lemma \ref{ux-bound}, we obtain  Gaussian
estimates for the density of the law of $Y_t$.
\begin{thm}
\lb{y-est}
Let (A1)--(A5) hold. Then, the distribution of $Y_t$ has a density $p_{Y_t}$ with respect to the
Lebesgue measure. Moreover, for almost all $x\in \Rnu$, this density satisfies the estimate
\aaa{  
  \lb{est1} 
\frac{\E |Y_t-\E[Y_t]|}{2\La(t)} \exp\Big(-\frac{\big(x-\E[Y_t]\big)^2}{2\la(t)}\Big) \lt 
p_{Y_t}(x) \lt \frac{\E |Y_t-\E[Y_t]|}{2\la(t)} \exp\Big(-\frac{\big(x-\E[Y_t]\big)^2}{2\La(t)}\Big),
}
where $\la(t)$ and $\La(t)$ are positive functions that can be computed explicitly. 
Further, for all $x>0$, the tail probabilities of $Y_t$ satisfy
\aaa{
\lb{tail}
\PP(Y_t > x) \lt \exp\Big(-\frac{\big(x-\E[Y_t]\big)^2}{2\La(t)}\Big) \quad \text{and} \quad
\PP(Y_t<-x) \lt \exp\Big(-\frac{\big(x+\E[Y_t]\big)^2}{2\La(t)}\Big).
}
\end{thm}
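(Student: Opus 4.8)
The plan is to apply Theorem \ref{privault-dung} to the random variable $F = Y_t$, exactly as was done for $X_t$, the only genuinely new ingredient being the two-sided control of $\pl_x u$. Since $Y_t = u(t,X_t)$ and $u$ is of class $\C^{1+\frac\beta2,2+\beta}_b$, the chain rule for the Malliavin derivative gives $D_r Y_t = \pl_x u(t,X_t)\, D_r X_t$, so $Y_t$ is Malliavin differentiable and $D_r Y_t = 0$ for $r>t$. It therefore suffices to verify the pinching condition \rf{condition} for $Y_t$ and to read off $\la(t)$ and $\La(t)$ from the resulting constants.

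First I would collect the available bounds. From \rf{dx-est} we have $\nu(M)e^{-M_\psi t}\lt D_rX_t\lt \mu(M)e^{M_\psi t}$ almost surely for $r\lt t$. Under (A1)--(A3) the gradient is bounded, $|\pl_x u|\lt M_1$, and Lemma \ref{ux-bound} supplies a positive function $m$ with $|\pl_x u(t,x)|\gt m(t)$ for all $x\in\Rnu$ (here I absorb the time reversal $\te(t,x)=u(T-t,x)$ into the definition of $m$). Hence $|\pl_x u(t,X_t)|$ lies between $m(t)$ and $M_1$, and combining this with the bounds on $D_rX_t$ yields, for $r\lt t$, two-sided bounds $0<c_1(t)\lt |D_rY_t|\lt c_2(t)$ almost surely, with $c_1(t)=m(t)\nu(M)e^{-M_\psi t}$ and $c_2(t)=M_1\mu(M)e^{M_\psi t}$.

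The key observation is that Lemma \ref{ux-bound} also fixes the sign of $\pl_x u$: under (A5)-(a) one has $\pl_x u>0$, whence $D_rY_t>0$, while under (A5)-(b) one has $\pl_x u<0$, whence $D_rY_t<0$; in either case $D_rY_t$ has a deterministic, $r$-independent sign for $r\lt t$. Since conditional expectation preserves almost sure sign and preserves the deterministic bounds, $\E[D_rY_t\,|\,\mc F_r]$ carries the same sign and satisfies $c_1(t)\lt |\E[D_rY_t\,|\,\mc F_r]|\lt c_2(t)$. Therefore the product $D_rY_t\,\E[D_rY_t\,|\,\mc F_r]$ is positive in both sign cases and is pinched between $c_1(t)^2$ and $c_2(t)^2$. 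Integrating over $r\in[0,t]$ (the integrand vanishes for $r>t$) gives
\[
t\,c_1(t)^2 \lt \int_0^\infty D_rY_t\,\E[D_rY_t\,|\,\mc F_r]\,dr \lt t\,c_2(t)^2 \quad\text{a.s.},
\]
so condition \rf{condition} holds with $\la(t)=t\,c_1(t)^2$ and $\La(t)=t\,c_2(t)^2$. Applying Theorem \ref{privault-dung} with $F=Y_t$, $l=\la(t)$, $L=\La(t)$ then delivers the density estimate \rf{est1} and the tail bounds \rf{tail}, and $\la(t),\La(t)$ are explicit once $m(t)$ is.

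Since the statement is essentially a corollary, I do not anticipate a genuine obstacle: the analytic difficulty has already been discharged in Lemma \ref{ux-bound}. The one point that needs care is the treatment of the two alternatives in (A5); I must check that the sign-indefiniteness of $\pl_x u$ does not spoil the argument, and it does not, precisely because \rf{condition} involves only the product $D_rY_t\,\E[D_rY_t\,|\,\mc F_r]$, which is sign-insensitive. A secondary bookkeeping point is confirming that conditional expectation transfers the deterministic bounds $c_1(t),c_2(t)$ to $\E[D_rY_t\,|\,\mc F_r]$, which is immediate from monotonicity of conditional expectation.
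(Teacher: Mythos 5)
Your proposal is correct and follows essentially the same route as the paper's own proof: combine $D_rY_t=\pl_x u(t,X_t)\,D_rX_t$ with the bounds \rf{dx-est} and Lemma \ref{ux-bound} to pinch $D_rY_t$ (up to a deterministic sign fixed by the alternative in (A5)), so that $\int_0^t D_rY_t\,\E[D_rY_t|\mc F_r]\,dr$ lies between $\la(t)=t\,(m(t)\nu(M)e^{-M_\psi t})^2$ and $\La(t)=t\,(M_1\mu(M)e^{M_\psi t})^2$, and then invoke Theorem \ref{privault-dung}. Your explicit remarks on the sign-insensitivity of the product and on absorbing the time reversal $\te(t,x)=u(T-t,x)$ into $m$ are careful touches that the paper leaves implicit, but they do not change the argument.
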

\begin{proof}
Since $D_rY_t = \pl_x u(t,X_t)D_rX_t$, by \rf{dx-est} and Lemma \ref{ux-bound},
 \eq{
 \lb{Y-bounds}
\text{either \; } &m(t)\nu(M) e^{-M_\psi t} \lt  D_r Y_t \lt M_1 \mu(M) e^{M_\psi t} \; \text{\; a.s.} \\
\text{or \; } &m(t)\nu(M) e^{-M_\psi t} \lt - D_r Y_t \lt M_1 \mu(M) e^{M_\psi t} \; \text{\; a.s.,} 
  }
  where $M_1$ is the bound for $\pl_x u$.
Taking into account that $D_rX_t = 0$ if $r>t$, we obtain
\aa{
\la(t) = t\big( m(t)\nu(M) e^{-M_\psi t} \big)^2 \lt \int_0^t D_r Y_t \,\E[D_rY_t|\mc F_r] dr < t\big(M_1\mu(M) e^{M_\psi t}\big)^2 = \La(t).
}
By Theorem \ref{privault-dung}, $Y_t$ has a density with respect to the Lebesgue measure,
and estimate \rf{est1} holds. Also,  we have estimates for the tail probabilities of $Y_t$,
given by \rf{tail}. 
\end{proof}
\subsection{Density estimates for the law of $Z_t$}
To estimate the density for $Z_t$, we recall that $Z_t = \pl_x u(t,X_t) \sg(t,X_t,u(t,X_t))$.
This immediately implies that $Z_t$ is Malliavin differentiable, and 
\aaa{
\lb{dz}
D_r Z_t = \big(\pl_x u(t,X_t)  \pl_x \td\sg(t,X_t) 
+ \pl^2_{xx}u(t,X_t) \td \sg(t,X_t)\big)D_rX_t,
}
where $\td\sg(t,x) = \sg(t,x,u(t,x))$.
Lemma \ref{lem2} below provides a lower bound for the derivative $\pl^2_{xx} u$.
\begin{lem}
\lb{lem2}
Let (A1)--(A4), (A5'), and (A7)--(A9) hold, and let $u$ be the solution to problem \rf{final}. 
Then there exists a positive function $\rho(t)$ such that
$\pl^2_{xx} u \gt \rho(t)$ for all $(t,x) \in [0,T]\x\Rnu$.
\end{lem}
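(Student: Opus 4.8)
The plan is to push the argument of Lemma \ref{ux-bound} one order higher: derive a linear parabolic PDE for $w = \pl^2_{xx}\te = (\pl^2_{xx} u)(T-t,\fdot)$, show its source term is bounded below by a strictly positive quantity, and then extract the lower bound from the comparison principle (Theorem \ref{comparison}). Throughout I work with the time-changed function $\te(t,x)=u(T-t,x)$, so that a lower bound for $\pl^2_{xx}\te$ on $[0,T]\x\Rnu$ transfers directly to $\pl^2_{xx} u$.

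First I would justify that the PDE \rf{eq-d} for $v=\pl_x\te$ may be differentiated once more in $x$. This is an analog of Proposition \ref{pr1} applied to the \emph{linear} problem \rf{eq-d}: under (A7) the second-order derivatives of $f$ and $g$ are H\"older continuous, so the coefficients $a$, $b$, $c$ of \rf{eq-d} and its source $-\pl_x g$ acquire one further spatial derivative of the class $\C^{\frac{\beta}2,\beta}_b$ needed to run the fundamental-solution argument. One then verifies, exactly as in Proposition \ref{pr1}, that $\pl^3_{xxx}\te$ and $\pl_t\pl^2_{xx}\te$ exist, that $w=\pl^2_{xx}\te$ is continuous on $[0,T]\x\Rnu$ with boundary value $w(0,\fdot)=h''$, and that it satisfies classically on $(0,T]\x\Rnu$ the equation obtained by differentiating \rf{eq-d}, namely
\[
\mc L' w := a\,\pl^2_{xx} w + \td b\,\pl_x w + \td c\, w - \pl_t w = -S(t,x),
\]
where $\td b=\tfrac{d}{dx}a+b$ and $\td c$ collects $\tfrac{d}{dx}b+c$ together with the terms proportional to $w$ produced by the chain rule (e.g.\ $\pl^2_{px}g\,\td\sg\,w$); by (A4) and (A7) both $\td b$ and $\td c$ are bounded, and $S$ depends on $(t,x)$ only through $\te$ and $v=\pl_x u$.

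The crux is Step three: organizing $S$. Since the arguments of $f$ and $g$ carry $p=\td\sg\,\pl_x\te$ and $\td\sg=\sg(\fdot,u)$, with $\pl_x\td\sg=\pl_x\sg+\pl_u\sg\,\pl_x u$ affine in $v$, the total second $x$-derivative $S=\tfrac{d}{dx}\pl_x g+(\tfrac{d}{dx}c)\,v$ expands into a polynomial in $v=\pl_x u$ whose coefficients are precisely the constant term $\pl^2_{xx}g$ of (A8) and the five left-hand sides of (A9), grouped by the power of $v$ they multiply (the factors $(\pl_x\sg)^2$, $(\pl_u\sg)^2$, $\pl_x\sg\,\pl_u\sg$ appear because $p=\td\sg v$ and $\pl_x\td\sg$ is affine in $v$). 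Now (A5$'$) gives $v=\pl_x u>0$, (A6) gives $\pl_x\sg,\pl_u\sg>0$, and the arguments $(x,\te,\pl_x\te)$ lie in $\mc R$; hence every monomial is nonnegative, the $v^0$-coefficient is bounded below by $\inf_{\mc R}\pl^2_{xx}g>0$, and therefore $S(t,x)\gt S_0(t)>0$ on $[0,T]\x\Rnu$. I expect this bookkeeping — checking that the large expression for $S$ really collapses into the compact combinations (A8)--(A9) with the stated powers of $v$ — to be the main obstacle, as essentially all the work lies here.

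Finally I would run the comparison argument verbatim: set $\td w=w-\rho(t)$ with $\rho(t)=\int_0^t\td\rho(s)\,ds$ and $\td\rho>0$ a sufficiently small function. Then $\mc L'\td w=-S-\td c\,\rho+\td\rho\lt -S_0(t)+(\text{bounded})\cdot\rho(t)+\td\rho(t)\lt 0$ once $\rho,\td\rho$ are small (using boundedness of $\td c$), while $\td w(0,x)=h''(x)\gt 0$ by (A8). Theorem \ref{comparison} then yields $\td w\gt 0$, i.e.\ $\pl^2_{xx}\te\gt\rho(t)$ on $[0,T]\x\Rnu$; undoing the time change, $(\pl^2_{xx}u)(T-t,x)\gt\rho(t)$, and relabeling the positive function gives $\pl^2_{xx}u\gt\rho(t)$ for all $(t,x)\in[0,T]\x\Rnu$, as claimed.
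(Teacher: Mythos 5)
Your proposal is correct and takes essentially the same route as the paper's proof: differentiate \rf{eq-d} once more in $x$ (justified via Proposition \ref{pr1} under (A7)), organize the resulting source term as $\pl^2_{xx}g$ plus the (A9) combinations multiplying powers of $\pl_x u$, and conclude via the comparison theorem applied to $\td w = w - \rho(t)$ with $h''\geqslant 0$ as initial data. Two cosmetic points: (A6) is not among the lemma's hypotheses and is not actually needed, since the factors $\pl_x\sg$, $\pl_u\sg$ already sit inside the (A9) expressions whose nonnegativity is assumed outright; and (A5') yields only $\pl_x u \geqslant 0$ rather than strict positivity --- obtained, as the paper notes, by rerunning the comparison argument of Lemma \ref{ux-bound} with $m\equiv 0$ --- which is all your monomial-by-monomial estimate requires.
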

\begin{proof}
Remark that linear PDE \rf{eq-d} takes form \rf{pde-lin} with $a$, $b$, and $c$ given by \rf{abc}.
Since $\te(t,x) = u(T-t,x)$
is of class $\C^{1+\frac{\beta}2, 2+\beta}_b([0,T]\x\Rnu)$, then, by (A7), the coefficients
$a(t,x,\te(t,x), \pl_x\te(t,x))$, $b(t,x,\te(t,x), \pl_x\te(t,x))$, and $c(t,x,\te(t,x), \pl_x\te(t,x))$ of PDE \rf{eq-d} 
and its right-hand side $-\pl_x g(t,x,\te(t,x),\pl_x \te(t,x))$
are of class $\C^{\frac\beta2,1+\beta}_b$ as functions of $(t,x)$. By Proposition \ref{pr1}, the solution $v=\pl_x u$
to \rf{eq-d} is of class $\C^{1,3}_b$ (and,  therefore, $u$ is of class $\C^{1,4}_b$), 
and we can differentiate PDE \rf{eq-d} w.r.t. $x$. 

Defining $w=\pl^2_{xx} u$ and replacing  $\pl^2_{xx} u$, $\pl^3_{xxx} u$, and $\pl^4_{xxxx}u$
by $w$, $\pl_x w$, and $\pl^2_{xx} w$, respectively, everywhere where it is possible, we obtain the following
PDE w.r.t. $w$
\mmm{
\lb{last2}
 a \, \pl^2_{xx} w + b\, \pl_x w + \mc P w -\pl_t w \\ 
=   - \pl^2_{xx} g - \Psi_1 \pl_x u - \Psi_2 (\pl_x u)^2 - \Psi_3 (\pl_x u)^3
- \Psi_4 (\pl_x u)^4 - \Psi_5 (\pl_x u)^5,
}
where $\mc P$ is a polynomial of  $\sg$, $f$, $g$, all their first 
and second order derivatives w.r.t. $x$, $u$, $p$, and, additionally, of $\pl_x u$.
Further, the functions
$\Psi_i$, $i=1,2,3,4,5$, are defined by the right-hand sides of the first, second, third, fourth, and the fifth 
inequalities, respectively, in assumption (A9). Let
$\mc L_1$ denote the partial differential operator defined by the left-hand side of \rf{last2}.
We proceed with the same argument as in
Lemma \ref{ux-bound}, that is, define the function
$\td w(t,x) = w(t,x) - \rho(t)$, where $\rho(t) = \int_0^t \td \rho(s) ds$ and $\td \rho(s)$ is a sufficiently small positive function. Then,
\aa{
\mc L_1 \td w = - \pl^2_{xx} g - \sum_{n=1}^5 \Psi_n (\pl_x u)^n  - \mc P\, \rho(t) + \td \rho(t).
}
Remark that  under (A5'), $\pl_x u\gt 0$ on $[0,T]\x \Rnu$. Indeed, this follows from the proof of Lemma \ref{ux-bound},
where we have to apply Theorem \ref{comparison} with $m(t) = 0$. 
Hence, by (A9),  $\sum_{n=1}^5 \Psi_n (\pl_x u)^n\gt 0$. Further, by (A2)--(A4), $\mc P$ is bounded.
Therefore, (A7) implies that if $\td \rho(t)$ and $\rho(t)$ are sufficiently small, then $\mc L_1 \td w \lt 0$. Since $h''\gt 0$,
by Theorem \ref{comparison}, we obtain that $w(t,x) \gt \rho(t)$ for all $(t,x)\in [0,T]\x\Rnu$.
\end{proof}
\begin{thm}
Let (A1)--(A4), (A5'), and (A6)--(A9) hold. Then, the distribution of $Z_t$ has a density $p_{Z_t}$ with respect to the
Lebesgue measure. Moreover,  for almost all $x\in \Rnu$, this density satisfies 
\aaa{  
  \lb{est2} 
\frac{\E |Z_t-\E[Z_t]|}{2\Sigma(t)} \exp\Big(-\frac{\big(x-\E[Z_t]\big)^2}{2\sig(t)}\Big) \lt 
p_{Z_t}(x) \lt \frac{\E|Z_t-\E[Z_t]|}{2\sig(t)} \exp\Big(-\frac{\big(x-\E[Z_t]\big)^2}{2\Sigma(t)}\Big),
}
where $\sig(t)$ and $\Sigma(t)$ are positive functions that can be computed explicitly. 
Further, for all $x>0$, the tail probabilities of $Z_t$ satisfy
\aaa{
\lb{tail2}
\PP(Z_t > x)  \lt \exp\Big(-\frac{\big(x-\E[Z_t]\big)^2}{2\Sigma(t)}\Big)
\quad \text{and} \quad 
\PP(Z_t<-x)  \lt \exp\Big(-\frac{\big(x+\E[Z_t]\big)^2}{2\Sigma(t)}\Big).
}
\end{thm}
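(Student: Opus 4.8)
The plan is to mirror the proofs of the density estimates for $X_t$ and $Y_t$: derive two-sided bounds for $D_r Z_t$, integrate them to control $\int_0^t D_r Z_t\,\E[D_r Z_t\,|\,\mc F_r]\,dr$, and then invoke Theorem \ref{privault-dung}. I would start from formula \rf{dz}, writing $D_r Z_t = G(t,X_t)\,D_r X_t$ with $G = \pl_x u\,\pl_x\td\sg + \pl^2_{xx}u\,\td\sg$ and $\td\sg(t,x) = \sg(t,x,u(t,x))$. Since the two-sided bound \rf{dx-est} for $D_r X_t$ is already available, everything reduces to producing a positive lower bound and a finite upper bound for the factor $G$.

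The key step I would carry out is the lower bound on $G$, which is exactly where the additional sign assumptions are needed. I would first expand $\pl_x\td\sg = \pl_x\sg + \pl_u\sg\,\pl_x u$, the derivatives of $\sg$ being evaluated at $(t,x,u(t,x))$. Under (A5'), applying the argument of Lemma \ref{ux-bound} with $m(t)=0$ gives $\pl_x u\gt 0$ on $[0,T]\x\Rnu$; combined with (A6), which supplies $\pl_x\sg\gt 0$ and $\pl_u\sg\gt 0$ on $[0,T]\x\Rnu\x\{|u|\lt M\}$, this forces $\pl_x\td\sg\gt 0$, hence $\pl_x u\,\pl_x\td\sg\gt 0$. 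For the remaining term I would use Lemma \ref{lem2}, namely $\pl^2_{xx}u\gt\rho(t)$, together with the lower bound $\td\sg\gt\nu(M)$ from (A1), to get $\pl^2_{xx}u\,\td\sg\gt\rho(t)\nu(M)$. Adding the two non-negative contributions yields $G(t,X_t)\gt\rho(t)\nu(M)>0$ almost surely.

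The upper bound for $G$ is routine: by (A1), (A3), the bound $M_1$ for $\pl_x u$, and the boundedness of $\pl^2_{xx}u$ (established under (A1)--(A3)), the factor $G$ is dominated by a constant $K$. Combining these bounds for $G$ with \rf{dx-est} gives $\rho(t)\nu(M)^2 e^{-M_\psi t}\lt D_r Z_t\lt K\mu(M)e^{M_\psi t}$ almost surely. Recalling that $D_r X_t=0$ for $r>t$, integrating over $r\in[0,t]$ yields $\sig(t)\lt\int_0^t D_r Z_t\,\E[D_r Z_t\,|\,\mc F_r]\,dr\lt\Sigma(t)$ with $\sig(t)=t\big(\rho(t)\nu(M)^2 e^{-M_\psi t}\big)^2$ and $\Sigma(t)=t\big(K\mu(M)e^{M_\psi t}\big)^2$. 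Theorem \ref{privault-dung} then delivers the density $p_{Z_t}$, estimate \rf{est2}, and the tail bounds \rf{tail2}.

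The main obstacle, and the reason that (A5') replaces (A5) and that (A6)--(A9) are imposed, is precisely the sign control on $G$. Unlike the cases of $X_t$ and $Y_t$, the Malliavin derivative of $Z_t$ involves $\pl^2_{xx}u$, so I would first need a positive lower bound for the second derivative --- this is Lemma \ref{lem2}, which in turn rests on the collection of inequalities in (A9) --- and then guarantee, via (A5') and (A6), that the first-order term $\pl_x u\,\pl_x\td\sg$ cannot cancel the strictly positive contribution $\pl^2_{xx}u\,\td\sg$.
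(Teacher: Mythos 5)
Your proposal is correct and follows essentially the same route as the paper's proof: the same factorization $D_rZ_t = G(t,X_t)\,D_rX_t$, the same expansion of $\pl_x\td\sg$ with positivity of the first-order terms from (A5') (via Lemma \ref{ux-bound} with $m(t)=0$) and (A6), the lower bound $\pl^2_{xx}u\,\td\sg \gt \rho(t)\nu(M)$ from Lemma \ref{lem2} and (A1), and the conclusion via the two-sided bound \rf{dx-est} and Theorem \ref{privault-dung}. Even your explicit expressions for $\sig(t)$ and $\Sigma(t)$ match the paper's (with your constant $K$ playing the role of the paper's $\gm$).
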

\begin{proof}
Assumptions (A5') and (A6)--(A9) provide the lower bound for the function
\aaa{
\lb{func}
\pl_x u \, \pl_x \sg + (\pl_x u)^2\pl_u \sg + \pl^2_{xx}u\,  \sg
}
on the right-hand side of \rf{dz}. Indeed,  $\pl_x u \, \pl_x \sg + (\pl_x u)^2\pl_u \sg \gt 0$ by (A5') and (A6). Finally, from (A1) and (A7)--(A9), by virtue of Lemma \ref{lem2}, it follows that
 $\pl^2_{xx}u\, \sg \gt \rho(t) \nu(M)$, where $\rho(t)$ is the positive function defined in Lemma \ref{lem2}
and $\nu(\fdot)$ is the function from (A1). Now taking into account that $D_rX_t$ possesses
upper and lower positive bounds, provided by
\rf{dx-est}, we obtain that the Malliavin derivative 
$D_r Z_t$ satisfies
\aa{
\nu(M)^2 \rho(t) e^{-M_\psi t}\lt D_rZ_t \lt \mu(M) \gm \,e^{M_\psi t} \quad \text{a.s.},
}
where $\gm$ is an upper bound for \rf{func}. This bound, indeed, exists by (A3) and since 
the solution $u$ has bounded derivatives. Now by the same argument as in Theorem \ref{y-est},
we obtain that the law of $Z_t$ possesses a density $p_{Z_t}$ w.r.t. the Lebesgue measure, and
\rf{est2} holds with $\Sigma_t = t \mu(M)^2 \gm^2 e^{2 M_\psi t}$ and 
$\sig(t) = t \nu(M)^4 \rho(t)^2 e^{-2 M_\psi t}$. Moreover, we obtain estimates \rf{tail2} for the
tail probabilities of $Z_t$.
\end{proof}
\section*{Acknowledgements}

Christian Olivera  is partially supported by FAPESP 	by the grants 2017/17670-0 and 2015/07278-0.


\begin{thebibliography}{99}

\bibitem{aboura}
O. Aboura, S. Bourguin,
Density Estimates for Solutions to One Dimensional Backward SDEs. Potential Anal (2013) 38:573--587.


\bibitem{antonelli} F. Antonelli, A. Kohatsu-Higa, Densities of one-dimensional backward SDEs. Potential Anal (2005) 22(3):263--287.

\bibitem{Ci}
J. Cvitanic and J. Ma , Hedging options for a large investor and forward-backward SDEs,
Ann. Appl. Probab., (1996) 6 370-398.


\bibitem{detemple}
J. Detemple, R. Garcia, M. Rindisbacher, Representation formulas for Malliavin derivatives of diffusion processes,
Finance Stochast. (2005) 9: 349--367.
\bibitem{privault} 

N.T. Dung, N. Privault, G.L. Torrisi, Gaussian estimates for the solutions of some one-dimensional stochastic equations. Potential Anal (2015) 43:289--311.

\bibitem{friedman}
A. Friedman. 
\newblock {\em Partial differential equations of parabolic type},
\newblock Robert E. Krieger publishing company, 1983.

\bibitem{Karo}
 N. El Karoui, S. Peng, and M. C. Quenez, Backward stochastic differential equations in
finance, Math. Finance, (1997) :7  1--71.

\bibitem{lady}
O.~Ladyzenskaja, V.~Solonnikov, N.N.~Uralceva.
\newblock {\em Linear and Quasi-Linear Equations of Parabolic Type}, vol.~23, 
{Translations of Mathematical Monographs}.\newblock American Mathematical Society, 1968.

\bibitem{lamperti}
J. Lamperti,  A simple construction of certain diffusion processes. J. Faculty Science Univ. Tokyo 32, 1--76 (1964).


\bibitem{ma}
J. Ma, P.~Protter, J.~Yong.
\newblock Solving forward backward stochastic differential equations
explicitly: a four step scheme.
\newblock {\em Probability Theory and Related Fields} (1994), 98:339--359.

\bibitem{mastrolia}
T. Mastrolia, D. Possama\"i, A. R\'eveillac,
Density Analysis for FBSDEs, The Annals of Probability
(2016), 44(4):2817--2857.


\bibitem{nualart} D. Nualart, {\em The Malliavin Calculus and Related Topics.}
Springer Science \& Business Media, 2006.


\bibitem{Par}
E. Pardoux and S. Peng,  {\em Backward stochastic differential equations and quasilinear parabolic
partial differential equations},  In Stochastic partial differential equations and their applications,
volume 176 of Lecture Notes in Control and Inform. Sci.,  200--217. Springer, 1992.


\bibitem{Yong} J. Yong and X. Y. Zhou, {\em  Stochastic Controls. Hamiltonian Systems and HJB Equations},
Springer, New York, 1999.
\end{thebibliography}
\end{document}